\let\realItem\item
\NewDocumentCommand\myItem{ o }{%
   \IfNoValueTF{#1}%
      {\realItem}
      {\realItem[#1]\def\@currentlabel{#1}}
}
\setlist[enumerate]{
  topsep=1pt,
  leftmargin=30pt,
  before=\let\item\myItem,
  label=\textnormal{(\arabic*)},
  widest=(a2')
}
  \def\hypersetup#1{}%
  \let\Cref\crtCref
  \let\cref\crtcref
\patchcmd{\ttlh@hang}{\parindent\z@}{\parindent\z@\leavevmode}{}{}
\patchcmd{\ttlh@hang}{\noindent}{}{}{}
\theoremstyle{plain}
\newtheorem{theorem}{Theorem}[section]
\newtheorem{lemma}[theorem]{Lemma}
\newtheorem{proposition}[theorem]{Proposition}
\providecommand{\customgenericname}{}
\newcommand{\newcustomtheorem}[2]{%
  \newenvironment{#1}[1]
  {%
   \renewcommand\customgenericname{#2}%
   \renewcommand\theinnercustomgeneric{##1}%
   \innercustomgeneric
  }
  {\endinnercustomgeneric}
}
\def\Xint#1{\mathchoice
{\XXint\displaystyle\textstyle{#1}}%
{\XXint\textstyle\scriptstyle{#1}}%
{\XXint\scriptstyle\scriptscriptstyle{#1}}%
{\XXint\scriptscriptstyle\scriptscriptstyle{#1}}%
\!\int}
\def\XXint#1#2#3{{\setbox0=\hbox{$#1{#2#3}{\int}$ }
\vcenter{\hbox{$#2#3$ }}\kern-.6\wd0}}
\def\dashint{\Xint-}
\theoremstyle{definition}
\theoremstyle{remark}
\newtheorem{remark}[theorem]{Remark}
\newtheorem*{remark*}{Remark}
\numberwithin{equation}{section}
\providecommand*{\cupdot}{%
  \mathbin{%
    \mathpalette\@cupdot{}%
  }%
}
\newcommand*{\@cupdot}[2]{%
  \ooalign{%
    $\m@th#1\cup$\cr
    \sbox0{$#1\cup$}%
    \dimen@=\ht0 %
    \sbox0{$\m@th#1\cdot$}%
    \advance\dimen@ by -\ht0 %
    \dimen@=.5\dimen@
    \hidewidth\raise\dimen@\box0\hidewidth
  }%
}
\providecommand*{\bigcupdot}{%
  \mathop{%
    \vphantom{\bigcup}%
    \mathpalette\@bigcupdot{}%
  }%
}
\newcommand*{\@bigcupdot}[2]{%
  \ooalign{%
    $\m@th#1\bigcup$\cr
    \sbox0{$#1\bigcup$}%
    \dimen@=\ht0 %
    \advance\dimen@ by -\dp0 %
    \sbox0{\scalebox{2}{$\m@th#1\cdot$}}%
    \advance\dimen@ by -\ht0 %
    \dimen@=.5\dimen@
    \hidewidth\raise\dimen@\box0\hidewidth
  }%
}
\DeclareMathOperator*{\diag}{diag}
\DeclareMathOperator*{\scale}{scale}
\DeclareMathOperator*{\diam}{diam}
\newcommand{\Indicator}{\mathds{1}}
\newcommand{\BS}{\dot{\mathbf{b}}^{\alpha}_{p,q}}
\newcommand{\TLone}{\dot{\mathbf{f}}^{\alpha_1}_{p_1,q_1}}
\newcommand{\TLtwo}{\dot{\mathbf{f}}^{\alpha_2}_{p_2,q_2}}
\newcommand{\TLonep}{\dot{\mathbf{f}}^{\alpha_1}_{p,q_1}}
\newcommand{\TLtwop}{\dot{\mathbf{f}}^{\alpha_2}_{p,q_2}}
\newcommand{\TLnaked}{\dot{\mathbf{f}}}
\newcommand{\TL}{\dot{\mathbf{f}}^{\alpha}_{p,q}}
\NewDocumentCommand\TLseq{O{\alpha}}{\mathbf{f}^{#1}_{p,q}}
\newcommand{\TLA}{\dot{\mathbf{f}}^{\alpha}_{p,q}(A)}
\newcommand{\TLB}{\dot{\mathbf{f}}^{\alpha}_{p,q}(B)}
\newcommand{\TLAi}{\dot{\mathbf{f}}^{\alpha}_{\infty,q}(A)}
\newcommand{\TLAii}{\dot{\mathbf{f}}^{\alpha}_{\infty,\infty}(A)}
\newcommand{\vertiii}[1]{{\left\vert \kern-0.25ex
                            \left\vert \kern-0.25ex
                              \left\vert #1\right\vert\kern-0.25ex
                            \right\vert \kern-0.25ex
                          \right\vert}}
\newcommand{\eps}{\varepsilon}
\renewcommand{\emptyset}{\varnothing}
\newcommand{\GL}{\operatorname{GL}}
\renewcommand{\Im}{\operatorname{Im}}
\DeclareFontFamily{U}{mathx}{\hyphenchar\font45}
\DeclareFontShape{U}{mathx}{m}{n}{
      <5> <6> <7> <8> <9> <10>
      <10.95> <12> <14.4> <17.28> <20.74> <24.88>
      mathx10
      }{}
\DeclareSymbolFont{mathx}{U}{mathx}{m}{n}
\DeclareMathAccent{\widecheck}{0}{mathx}{"71}
\DeclareMathAccent{\wideparen}{0}{mathx}{"75}
\newcommand{\Q}{\mathbb{Q}}
\newcommand{\R}{\mathbb{R}}
\newcommand{\CC}{\mathbb{C}}
\newcommand{\N}{\mathbb{N}}
\newcommand{\Z}{\mathbb{Z}}
\title[Discrete Triebel-Lizorkin spaces and expansive matrices]{Discrete Triebel-Lizorkin spaces and expansive matrices}
\author{Jordy Timo van Velthoven}
\address[JvV]{Faculty of Mathematics,
University of Vienna,
Oskar-Morgenstern-Platz 1,
A-1090 Vienna, Austria}
\email{jordy.timo.van.velthoven@univie.ac.at}
\author{Felix Voigtlaender}
\address[FV]{Mathematical Institute for Machine Learning and Data Science (MIDS),
Catholic University of Eichstätt–Ingolstadt (KU),
Auf der Schanz 49, 85049 Ingolstadt, Germany}
\email{felix.voigtlaender@ku.de}
\subjclass[2020]{42B25, 42B30, 42B35, 46E35}
\keywords{Discrete Triebel-Lizorkin spaces,
Expansive matrices,
Littlewood-Paley spaces.}
\begin{document}

\maketitle

\begin{abstract}
We provide a characterization of two expansive dilation matrices
yielding equal discrete anisotropic Triebel-Lizorkin spaces.
For two such matrices $A$ and $B$, and arbitrary $\alpha \in \R$ and $p,q \in (0,\infty]$,
it is shown that $\TL(A) = \TL(B)$ if and only if the set $\{A^j B^{-j} : j \in \Z\}$ is finite,
or in the trivial case when $|\det(A)|^{\alpha + 1/2 - 1/p} = |\det(B)|^{\alpha + 1/2 - 1/p}$ and $p = q$.
This provides an extension of a result by Triebel for diagonal dilations to arbitrary expansive matrices.
The obtained classification of dilations is different from corresponding results for anisotropic Triebel-Lizorkin function spaces.
\end{abstract}

\section{Introduction}

Let $A \in \mathrm{GL}(d, \mathbb{R})$ be an expansive matrix, i.e., all eigenvalues
$\lambda \in \CC$ of $A$ satisfy $|\lambda|>1$.
The associated discrete (homogeneous) Besov space $\BS(A)$ and Triebel-Lizorkin space $\TL(A)$
are defined to consist of those sequences $c \in \CC^{\Z \times \Z^d}$ satisfying
\[
  \| c \|_{\BS(A)}
  := \bigg(
       \sum_{j \in \Z}
         |\det(A)|^{-jq (\alpha + 1/2)}
         \bigg\| \sum_{k \in \Z^d} |c_{j,k}| \mathds{1}_{A^j ([0, 1]^d + k)} \bigg\|_{L^p}^q
     \bigg)^{1/q}
   < \infty
\]
when $\alpha \in \R$ and $p, q \in (0, \infty]$ (with the usual modification for $q = \infty$), respectively
\begin{align} \label{eq:TL_intro}
  \| c \|_{\TLA}
  := \bigg\|
       \bigg(
         \sum_{j \in \mathbb{Z}}
           |\det(A)|^{-jq( \alpha + 1/2) }
           \sum_{k \in   \mathbb{Z}^d}
             |c_{j,k}|^q \Indicator_{A^j ([0,1]^d + k)}
       \bigg)^{1/q}
     \bigg\|_{L^p} < \infty,
\end{align}
when $\alpha \in \R$, $p \in (0, \infty)$ and $q \in (0, \infty]$
(with the usual modifications when $q = \infty$);
see Section \ref{sec:TL} for the more technical definition of the spaces $\TLA$ when $p = \infty$.
The importance of these sequence spaces is that they characterize the decay of wavelet coefficients
of functions/distributions in the associated Besov and Triebel-Lizorkin function spaces.
As such, many problems regarding Besov and Triebel-Lizorkin spaces can be reduced
to the corresponding sequence spaces, which are often easier to work with;
see, e.g., \cite{bownik2008duality, bownik2006atomic, frazier1991littlewood, frazier1990discrete}.
Moreover, discrete Triebel-Lizorkin spaces (also called discrete Littlewood-Paley spaces)
naturally occur in the study of weighted inequalities and Carleson coefficients,
see, e.g., \cite{hanninen2018equivalence, verbitsky1996imbedding, hanninen2020two}
and the references therein.

One question on anisotropic Besov and Triebel-Lizorkin function spaces that has received particular
attention over the last years is how they depend on the choice of the dilation matrix;
see \cite{cheshmavar2020classification, koppensteiner2023classification, bownik2003anisotropic, velthoven2024classification}.
The same question for their corresponding sequence spaces appears to have been first considered
in \cite{triebel2004wavelet} and was motivated by the so-called \emph{transference method},
which allows to transfer problems for anisotropic Besov spaces via sequence spaces
to \emph{isotropic} Besov spaces.
Indeed, note that it follows readily from the definition of discrete Besov spaces that
$\dot{\mathbf{b}}_{p,q}^{\alpha} (A) = \dot{\mathbf{b}}_{p,q}^{\alpha} (B)$
for two expansive matrices $A, B \in \mathrm{GL}(d, \mathbb{R})$ if and only if
\begin{align} \label{eq:det_intro}
  |\det(A)|^{\alpha + 1/2 - 1/p} =  |\det(B)|^{\alpha + 1/2 - 1/p}.
\end{align}
In other words, the scale of discrete anisotropic Besov spaces is \emph{independent} of the choice of the dilation matrix.
Similarly, it follows readily from the (quasi-)norms  \eqref{eq:TL_intro} that if $p = q$
and the identity \eqref{eq:det_intro} holds, then $\TLnaked^{\alpha}_{p,q} (A) = \TLnaked^{\alpha}_{p,q} (B)$.
However, the question whether the full scale of discrete Triebel-Lizorkin spaces is independent
of the choice of the dilation matrix is remarkably more subtle.
As a matter of fact, it was conjectured in \cite[Conjecture 11]{triebel2004wavelet}
that for diagonal matrices $A = \diag(2^{a_1}, \dots, 2^{a_d})$ and $B = \diag(2^{b_1}, \dots, 2^{b_d})$
with anisotropies $ (a_1, \dots, a_d), (b_1, \dots, b_d) \in (0, \infty)^d$
satisfying $\sum_{i = 1}^d a_i = \sum_{i = 1}^d b_i = d$ (so that $\det(A) = \det(B)$),
the spaces $\TL(A) $ and $ \TL(B)$ coincide for all $\alpha \in \R$, $p \in (0, \infty)$
and $q \in (0, \infty]$.
The fact that the conjecture \cite[Conjecture 11]{triebel2004wavelet}
was incorrect as stated was shown by the following theorem; see \cite[Proposition 5.26]{triebel2006theory}.

\begin{theorem}[\cite{triebel2006theory}] \label{thm:triebel}
Let $\alpha \in \R$, $p \in (0, \infty)$ and $q \in (0, \infty]$.
Let $A = \diag(2^{a_1}, \dots, 2^{a_d})$ and $ B = \diag(2^{b_1}, \dots, 2^{b_d})$ for anisotropies
\[
  (a_1, \dots, a_d), (b_1, \dots, b_d) \in (0, \infty)^d
  \quad \text{satisfying} \quad
  \sum_{i = 1}^d a_i = \sum_{i = 1}^d b_i = d.
\]
Suppose that $A \neq B$. Then $\TL(A) = \TL(B)$ if and only if $p = q$.
\end{theorem}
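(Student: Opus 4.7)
For the ``if'' direction, when $p = q$ the disjointness of the cubes $\{Q^A_{j,k}\}_{k \in \Z^d}$ at each fixed scale $j$ (with $Q^A_{j,k} := A^j([0,1]^d + k)$) allows \eqref{eq:TL_intro} to collapse via Fubini to $\|c\|_{\dot{\mathbf{f}}^\alpha_{p,p}(A)}^p = \sum_{j,k} |c_{j,k}|^p |\det A|^{j(1 - p(\alpha + 1/2))}$, an expression depending only on $|\det A| = |\det B| = 2^d$; hence the two quasi-norms coincide.

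For the converse, suppose $A \neq B$ and $\TL(A) = \TL(B)$ with equivalent quasi-norms (by the closed graph theorem). A lifting reduction via the isometric isomorphism $\Phi_\sigma(c)_{j,k} := |\det A|^{-j\sigma} c_{j,k}$ from $\dot{\mathbf{f}}^{\alpha + \sigma}_{p,q}(C)$ to $\dot{\mathbf{f}}^{\alpha}_{p,q}(C)$, which works identically for both $C \in \{A, B\}$ since $|\det A| = |\det B|$, allows us to assume $\alpha = -1/2$, rendering the scale weight $|\det A|^{-jq(\alpha + 1/2)} = 1$. Fix $N \in \N$ and define the test sequence $c^{(N)}_{j,k} := 1$ if $0 \le j \le N$ and $Q^A_{j,k} \subset Q^A_{N,0}$, and $c^{(N)}_{j,k} := 0$ otherwise. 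For each $j \in \{0,\dots,N\}$, the supported cubes tile $Q^A_{N,0}$ disjointly, so the inner sum in \eqref{eq:TL_intro} equals $N + 1$ on $Q^A_{N,0}$ and vanishes elsewhere, giving $\|c^{(N)}\|_{\TL(A)}^p = (N+1)^{p/q} \cdot 2^{Nd}$.

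For the $B$-norm, the $B$-cubes $\{Q^B_{j,k} : Q^A_{j,k} \subset Q^A_{N,0}\}$ at scale $j$ are pairwise disjoint translates whose union is the axis-aligned box $R_j^{(N)} := \prod_{i=1}^d [0,\, 2^{Na_i + j(b_i - a_i)}]$ of volume $2^{Nd}$. Since each side length $s_i(j) := Na_i + j(b_i - a_i)$ is monotone in $j$, the set $\{j : x \in R_j^{(N)}\}$ is an interval, and a direct computation gives $|R_{j_1}^{(N)} \cap R_{j_2}^{(N)}| = 2^{Nd - |j_1 - j_2|\Delta}$, where $\Delta := \sum_{i : b_i > a_i}(b_i - a_i)$; this $\Delta$ is strictly positive precisely because $A \neq B$ together with $\sum a_i = \sum b_i$ forces some indices with $b_i > a_i$. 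Setting $\#(x) := |\{j : x \in R_j^{(N)}\}|$, a union bound over the intersections $R_{j_1}^{(N)} \cap R_{j_1 + k - 1}^{(N)}$ yields the level-set estimate $|\{x : \#(x) \ge k\}| \le (N - k + 2) \cdot 2^{Nd - (k-1)\Delta}$, and layer-cake integration then produces
\[
    \|c^{(N)}\|_{\TL(B)}^p \;=\; \int \#(x)^{p/q}\, dx \;\asymp\; N \cdot 2^{Nd},
\]
with constants depending only on $p, q, d, \Delta$.

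Consequently the ratio of quasi-norms satisfies $\|c^{(N)}\|_{\TL(A)}^p / \|c^{(N)}\|_{\TL(B)}^p \asymp (N+1)^{p/q}/N$, which diverges to $\infty$ for $p > q$ and tends to $0$ for $p < q$. Either behavior contradicts norm equivalence, forcing $p = q$. The main technical step is the sharp two-sided estimate $\|c^{(N)}\|_{\TL(B)}^p \asymp N \cdot 2^{Nd}$; crucially, the ``core'' $\bigcap_j R_j^{(N)}$ on which $\#$ attains its maximum $N + 1$ has measure $2^{N(d - \Delta)}$ and is therefore negligible compared to $N \cdot 2^{Nd}$, so that the factor $(N+1)^{p/q}$ appearing in the $A$-norm is genuinely absent from the $B$-norm.
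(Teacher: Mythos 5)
Your overall strategy is sound and close in spirit to Triebel's original argument, which the paper does not reproduce: within the paper, \Cref{thm:triebel} is obtained as a special case of the general machinery, since for diagonal $A \neq B$ with $\det A = \det B$ the set $\{B^j A^{-j} : j \in \Z\} = \{\diag(2^{j(b_1-a_1)},\dots,2^{j(b_d-a_d)}) : j \in \Z\}$ is infinite, so \Cref{thm:p=q} forces $p=q$, while the ``if'' direction is exactly the Fubini computation you give. The main structural difference is that the paper's proof of \Cref{thm:p=q} uses \Cref{lem:pairwisedisjoint} to make the $B$-side supports $\Lambda_{t,R} = B^{j_t}A^{-j_t}\Omega_{t,R}$ \emph{pairwise disjoint}, so that the $B$-norm is exactly a weighted $\ell^p$-norm of the coefficients and no overlap analysis is needed; you instead let the boxes $R_j^{(N)}$ overlap and control the overlaps quantitatively via $|R_{j_1}^{(N)} \cap R_{j_2}^{(N)}| = 2^{Nd - |j_1-j_2|\Delta}$. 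Your route is more computational and tied to the diagonal structure, which is acceptable for this particular statement but would not generalize to arbitrary expansive matrices.

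There is, however, one genuine gap: the two-sided estimate $\|c^{(N)}\|^p_{\TL(B)} = \int \#(x)^{p/q}\,dx \asymp N\cdot 2^{Nd}$ is only half proved. Layer-cake integration of your level-set bound $|\{x : \#(x)\geq k\}| \leq (N-k+2)\,2^{Nd-(k-1)\Delta}$ yields the \emph{upper} bound $\lesssim_{p/q,\Delta} N 2^{Nd}$, but gives nothing from below, and the trivial lower bound $\int \#^{p/q} \geq |\{\#\geq 1\}|$ only gives $\gtrsim 2^{Nd}$ a priori. The upper bound alone does settle the case $p>q$, since then $\|c^{(N)}\|_{\TL(A)}/\|c^{(N)}\|_{\TL(B)} \gtrsim (N+1)^{1/q}/N^{1/p} \to \infty$; but for $p<q$ (in particular for $q=\infty$) you need $\int \#^{p/q} \gtrsim N2^{Nd}$, and that requires an extra argument. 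One fix: observe that $\int \#\,dx = \sum_j |R_j^{(N)}| = (N+1)2^{Nd}$ exactly, and use the geometric decay of the level sets to show $\int_{\{\#\geq k_0\}} \#\,dx \leq \tfrac12 (N+1)2^{Nd}$ for some $k_0 = k_0(\Delta)$; then $\int \#^{p/q} \geq k_0^{p/q-1}\int_{\{1\leq \#<k_0\}}\#\,dx \gtrsim_{\Delta,p/q} N2^{Nd}$ when $p/q<1$ (for $p/q\geq 1$ one can instead apply H\"older together with $|\{\#\geq1\}|\leq (N+1)2^{Nd}$). Alternatively, restrict the scales to an arithmetic progression $j \in m_0\Z \cap [0,N]$ with $m_0\Delta$ large, which makes the boxes almost disjoint and mirrors the disjointness trick of \Cref{lem:pairwisedisjoint}. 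A separate, minor point: $2^{(N-j)a_i}$ need not be an integer, so the supported $A$-cubes at scale $j$ tile $\prod_i [0, 2^{ja_i}\lfloor 2^{(N-j)a_i}\rfloor]$ rather than $Q^A_{N,0}$, and your identities for $\|c^{(N)}\|_{\TL(A)}$, $|R_j^{(N)}|$ and the pairwise intersections hold only up to constants depending on $d$; this does not affect the conclusion.
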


The aim of the present paper is to provide a characterization of two arbitrary expansive matrices
$A, B \in \mathrm{GL}(d, \R)$ yielding the same discrete Triebel-Lizorkin spaces $\TL(A) = \TL(B)$.
The following theorem provides our sufficient condition for the coincidence of sequence spaces.

\begin{theorem} \label{thm:sufficient_intro}
If $A, B \in \mathrm{GL}(d, \R)$ are expansive matrices such that $\{A^j B^{-j} : j \in \Z\}$ is a finite set,
then $\TL(A) = \TL(B)$ for all $\alpha \in \R$ and $p, q \in (0, \infty]$.
\end{theorem}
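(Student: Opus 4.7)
The plan is to compare the two quasi-norms scale by scale, exploiting the fact that finiteness of $\{A^j B^{-j} : j \in \Z\}$ forces the $A$- and $B$-tiles at each scale to be related by a single element of a finite family of linear maps. First, since $\det(A^j B^{-j}) = (\det A / \det B)^j$ is a geometric progression in $j$, the hypothesis forces $|\det A| = |\det B|$; equivalently $|\det C_i| = 1$ for every member $C_i$ of the finite set $\{C_1,\dots,C_N\} := \{A^j B^{-j} : j \in \Z\}$, and the scale weights $w_j := |\det A|^{-jq(\alpha+1/2)}$ appearing in \eqref{eq:TL_intro} coincide for $A$ and $B$. Writing $\phi_j := A^j B^{-j} \in \{C_1,\dots,C_N\}$, the identity $A^j = \phi_j B^j$ gives $A^j([0,1]^d + k) = \phi_j\bigl(B^j([0,1]^d+k)\bigr)$, so the $A$-tile family at scale $j$ is the image under the single linear map $\phi_j$ of the $B$-tile family at the same scale, with the same index set $\Z^d$.

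For $p, q < \infty$ I would introduce the slice functions
\begin{equation*}
  F_j^A(x) := \Bigl(\sum_{k \in \Z^d} |c_{j,k}|^q \Indicator_{A^j([0,1]^d + k)}(x)\Bigr)^{1/q},
\end{equation*}
and $F_j^B$ analogously, so that $\|c\|_{\TL(A)} = \bigl\| \bigl(\sum_{j} w_j (F_j^A)^q\bigr)^{1/q} \bigr\|_{L^p}$. Since each tile family partitions $\R^d$ up to a null set with common index set $\Z^d$, the tile identification gives the pointwise (a.e.) relation $F_j^A = F_j^B \circ \phi_j^{-1}$. Grouping the scales according to the finite value of $\phi_j$, i.e.\ $J_i := \{j \in \Z : \phi_j = C_i\}$ and $H_i := \bigl(\sum_{j \in J_i} w_j (F_j^B)^q\bigr)^{1/q}$, one then obtains the pointwise identity
\begin{equation*}
  \sum_{j \in \Z} w_j F_j^A(x)^q \;=\; \sum_{i=1}^N H_i\bigl(C_i^{-1} x\bigr)^q,
\end{equation*}
whereas $\|c\|_{\TL(B)} = \|G\|_{L^p}$ for $G := \bigl(\sum_{j \in \Z} w_j (F_j^B)^q\bigr)^{1/q}$. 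Using the pointwise bound $H_i \leq G$, the elementary estimate $(\sum_{i=1}^N a_i^q)^{1/q} \leq N^{1/q}\max_i a_i$, and the change of variables $y = C_i^{-1} x$ (which has unit Jacobian, as $|\det C_i| = 1$), one concludes $\|c\|_{\TL(A)} \leq N^{1/q + 1/p}\, \|c\|_{\TL(B)}$. The reverse inequality follows by running the same argument with $A$ and $B$ swapped, since $\{B^j A^{-j}\} = \{C_i^{-1} : 1 \leq i \leq N\}$ is also finite of cardinality $N$. The cases $q = \infty$ or finite $p$ with $q = \infty$ are handled analogously, with sums in $j$ replaced by suprema in the standard way.

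The only step I foresee requiring extra care is the case $p = \infty$, whose definition (referenced, but not displayed in the excerpt) involves a Carleson-type supremum over cubes rather than a pure $L^\infty$-norm. The structural input of the argument — that at each fixed scale $j$ the $A$- and $B$-tiles differ by one of finitely many invertible linear maps of unit determinant — is independent of $p$ and $q$, so the same scheme should go through; however, one must verify that the class of cubes entering the definition of $\TLAi$ is preserved, up to a bounded comparison, by each $C_i$, which amounts to a direct comparison of the dyadic structures associated with $A$ and $B$. Apart from this endpoint issue, the proof reduces to the pointwise identification $F_j^A = F_j^B \circ \phi_j^{-1}$ followed by standard change-of-variable and quasi-triangle-inequality estimates.
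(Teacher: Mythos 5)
Your argument for $p,q<\infty$ is essentially the paper's proof: both partition $\Z$ into the fibres $J_i=\{j: A^jB^{-j}=C_i\}$, identify the $A$-tiles at scale $j$ with the $B$-tiles via the single map $\phi_j=C_i$, and conclude by a change of variables with unit Jacobian (the paper phrases this as splitting the $L^p$ quasi-norm into $N$ pieces rather than via your pointwise identity $\sum_j w_j (F_j^A)^q=\sum_i H_i(C_i^{-1}\cdot)^q$, but the content is the same, as are the extensions to $q=\infty$).

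The one genuine gap is the case $p=\infty$, $q<\infty$, which you flag but do not resolve. Your suggested route --- showing that each $C_i$ maps the family $\mathcal{Q}^A$ of dilated cubes to something comparable to $\mathcal{Q}^B$ --- is harder than it may look: in the Carleson-type quantity \eqref{eq:TLAi} the outer cube $P=Q^A_{j_0,k_0}$ is the image of a $B$-cube under $C_{i_0}$ (where $j_0\in J_{i_0}$), while the inner sum over $j\le\scale(P)$ mixes scales $j$ lying in \emph{different} fibres $J_i$, so no single change of variables turns the whole average into a $B$-average; after changing variables on the piece indexed by $J_i$ one lands on the set $C_i^{-1}C_{i_0}Q^B_{j_0,k_0}$, which is not a $B$-cube, and one would have to cover it by boundedly many $B$-cubes of the same scale with controlled constants. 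The paper avoids all of this by invoking \Cref{lem:borelsets}: replacing the supremum over cubes by an infimum over large Borel subsets $E_{j,k}\subseteq Q^A_{j,k}$ measured in $L^\infty$ decouples the norm from the cube geometry entirely, and the observation that $E_{j,k}\mapsto B^j(A^{-j}E_{j,k}-k)+B^jk$ bijects the admissible subsets of $Q^A_{j,k}$ onto those of $Q^B_{j,k}$ finishes the argument. If you want to complete your proof you should either carry out the covering argument sketched above or, more simply, adopt the characterization of \Cref{lem:borelsets}.
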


We mention that the set $\{A^j B^{-j} : j \in \Z\}$ is finite if and only if $A^k = B^k$
for some $k \in \mathbb{N}$, cf.\ \Cref{lem:characterization}.
It is of interest to compare this sufficient condition to the characterization
of two expansive matrices yielding equal anisotropic (homogeneous) Triebel-Lizorkin function spaces.
In \cite{koppensteiner2023classification}, it was shown that such spaces coincide
if and only if we have $p \in (1, \infty)$, $q = 2$ and $\alpha = 0$, or if
\begin{align} \label{eq:equivalent_matrices}
 \sup_{j \in \Z} \| A^{-j} B^{ \lfloor \varepsilon j \rfloor} \| < \infty,
\end{align}
with $\varepsilon := \ln |\det(A)| / \ln |\det(B)|$, which in turn is equivalent
to two quasi-norms associated to $A$ and $B$ being equivalent; see \cite[Section 10]{bownik2003anisotropic}.
The condition that $\{A^j B^{-j} : j \in \Z\}$ is a finite set (cf.\ \Cref{thm:sufficient_intro}) implies,
in particular, that $|\det(A)| = |\det(B)|$ and is thus much stronger than the condition \eqref{eq:equivalent_matrices}
classifying anisotropic Triebel-Lizorkin function spaces \cite{koppensteiner2023classification}.

We make some further comments on the necessity of the sufficient condition of \Cref{thm:sufficient_intro}.
First, we mention that if two expansive matrices $A, B \in \mathrm{GL}(d, \R)$
have only positive eigenvalues and satisfy $\det(A) = \det(B)$ and \eqref{eq:equivalent_matrices},
then necessarily $A = B$, cf.\ \cite[Theorem 7.9]{cheshmavar2020classification}.
In particular, if $A, B \in \mathrm{GL}(d, \R)$ are expansive matrices having only positive eigenvalues
and satisfy the sufficient condition of \Cref{thm:sufficient_intro}, then $A = B$.
Second, it is not difficult to show that the matrices $A = \diag(2, 2)$ and $B = \diag(2, -2)$
provide examples of matrices $A \neq B$ still yielding $\TL(A) = \TL(B)$.
Thus, in contrast to the setting of \Cref{thm:triebel}, the coincidence of spaces $\TL(A) = \TL(B)$
is generally \emph{not} equivalent to $A= B$.
Lastly, we mention that for the matrices $A = 2 \cdot I$ and $B = 2 \cdot R_{\phi}$, where
\[
 R_{\phi} =
 \begin{pmatrix}
  \cos(\phi) & - \sin(\phi) \\
  \sin(\phi) & \cos(\phi)
 \end{pmatrix}
 , \quad \phi \in \mathbb{R} \setminus \mathbb{Q},
\]
the spaces $\TL(A)$ and $\TL(B)$ can be shown to be distinct in case $p \neq q$ (see also \Cref{thm:necessary_intro}).
Note that for such matrices the set $\{ A^j B^{-j} : j \in \Z\}$ is infinite,
which follows, for example, from Weyl's equidistribution theorem.

Our main result shows that the sufficient condition provided by \Cref{thm:sufficient_intro}
is in general also necessary for the coincidence of the scale of discrete Triebel-Lizorkin spaces.
More precisely, we show the following general theorem.

\begin{theorem} \label{thm:necessary_intro}
Let $A, B \in \mathrm{GL}(d, \R)$ be expansive, $\alpha_1, \alpha_2 \in \R$ and $p_1, p_2, q_1, q_2 \in (0, \infty]$.

If $\TLone(A) = \TLtwo(B)$, then $ p_1 = p_2$ and at least least one of the following conditions hold:
\begin{enumerate}
 \item[(i)] The set $\{A^j B^{-j} : j \in \Z\}$ is finite, $\alpha_1 = \alpha_2$ and $q_1 = q_2$;
 \item[(ii)] $p_1 = p_2 = q_1 = q_2$ and $|\det(A)|^{\alpha_1 + 1/2 - 1/{p_1}} = |\det(B)|^{\alpha_2 + 1/2 - 1/{p_2}}$.
\end{enumerate}
\end{theorem}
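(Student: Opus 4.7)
\emph{Reductions via single-scale sequences.} Since the indicator functions $\Indicator_{A^j([0,1]^d + k)}$ for $k \in \Z^d$ have pairwise disjoint supports and cover $\R^d$, testing the equality $\TLone(A) = \TLtwo(B)$ on sequences supported at a single scale $j$ collapses the inner $\ell^{q_i}$-sums. A direct computation yields
\[
 \|c\|_{\TLone(A)} = |\det(A)|^{-j(\alpha_1 + 1/2 - 1/p_1)} \Big( \sum_{k \in \Z^d} |c_{j,k}|^{p_1} \Big)^{1/p_1},
\]
and likewise for $B$. By the open mapping theorem, coincidence of the spaces gives quasi-norm equivalence, so matching these formulas on arbitrary finitely supported sequences at $j = 0$ forces $p_1 = p_2 =: p$, and varying $j$ then forces the determinant identity $|\det(A)|^{\alpha_1 + 1/2 - 1/p} = |\det(B)|^{\alpha_2 + 1/2 - 1/p}$.

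\emph{Ruling out mixed behavior of $p$ and $q$.} I would next show that if $p$ coincides with either $q_1$ or $q_2$, then in fact $p = q_1 = q_2$, placing us in case (ii); otherwise $q_1 = q_2$ and $\alpha_1 = \alpha_2$. If $p = q_1$, then $\|\cdot\|_{\TLone(A)}$ reduces to a weighted $\ell^p$-norm, whereas expanding $\|\cdot\|_{\TLtwo(B)}$ on a sequence supported on two nested $B$-tiles produces a contribution of the form $|\det(B)|^{j_2}(a_1^{q_2} + a_2^{q_2})^{p/q_2}$ which is uniformly comparable in $(a_1, a_2)$ to a pure $\ell^p$-expression only if $q_2 = p$. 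Assuming now $p \notin \{q_1, q_2\}$, I would test with towers of $N$ nested $A$-tiles $A^{-n}([0,1]^d + k_n)$, $n = 0, \ldots, N$, and compare the leading asymptotics of $\|c\|_{\TLone(A)}$ and $\|c\|_{\TLtwo(B)}$ as $N \to \infty$; combined with the already-established determinant identity, this yields $q_1 = q_2 =: q$, then $|\det(A)| = |\det(B)|$, and hence $\alpha_1 = \alpha_2$.

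\emph{Finiteness of $\{A^j B^{-j}\}$.} What remains is the main step: given $p_1 = p_2 = p$, $q_1 = q_2 = q$ with $p \neq q$, $\alpha_1 = \alpha_2$, $|\det(A)| = |\det(B)|$, and $\TL(A) = \TL(B)$, deduce that $\{A^j B^{-j} : j \in \Z\}$ is finite. I would argue by contrapositive. Since $|\det(A^j B^{-j})| = 1$ for all $j$, infiniteness of this set means it either escapes to infinity in $\mathrm{GL}(d, \R)$ or admits an accumulation point reached by infinitely many distinct $j_n$. In either case, for large $n$ the $A^{j_n}$- and $B^{j_n}$-tilings overlap in a pattern that is not a $\Z^d$-translate of the overlap at scale $0$. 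The plan is to build test sequences $c^{(n)}$ supported at scales $0$ and $j_n$ on tiles selected to exploit this misalignment, so that the $p \neq q$ asymmetry in the TL quasi-norm inflates $\|c^{(n)}\|_{\TL(A)} / \|c^{(n)}\|_{\TL(B)}$ without bound, contradicting norm equivalence. \emph{The main obstacle} is producing a single family of test sequences that works uniformly across both modes of infiniteness of $\{A^j B^{-j}\}$ (bounded accumulation and escape to infinity), and that is robust under the $\Z^d$-translation invariance of the tilings; this will presumably require the geometric machinery developed earlier in the paper.
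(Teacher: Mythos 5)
Your first step (testing on sequences supported at a single scale, obtaining $p_1=p_2$ and the identity $|\det(A)|^{\alpha_1+1/2-1/p}=|\det(B)|^{\alpha_2+1/2-1/p}$ from quasi-norm equivalence) is exactly the paper's \Cref{prop:p1=p2} together with \Cref{lem:basic_concidence}, modulo the fact that the norm equivalence comes from the closed graph theorem (via completeness and the continuous embedding into $\CC^{\Z\times\Z^d}$, \Cref{lem:basic}) rather than the open mapping theorem. After that, however, the proposal has genuine gaps. First, a quantitative one: a sequence supported on \emph{two} nested tiles cannot distinguish $q_2$ from $p$, since for a fixed number $N$ of terms all $\ell^r$-quasi-norms are equivalent with constants depending only on $N$; one needs towers of unbounded length $N\to\infty$, as in \Cref{prop:q1=q2}. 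Second, and more seriously, your derivation of $q_1=q_2$ in the branch $p\notin\{q_1,q_2\}$ is circular in its logical order: for a sequence adapted to nested $A$-tiles with supports $\Omega_j=\bigcup_{k}Q^A_{j,k}$ all containing $[0,1]^d$, the corresponding $B$-supports are $B^jA^{-j}\Omega_j$, which need not be nested, overlap, or stay bounded unless one already controls $\{B^jA^{-j}:j\in\Z\}$. Indeed, when that set is infinite one can localize so that these images are \emph{pairwise disjoint}, in which case the $B$-quasi-norm of your test sequence is an $\ell^{p}$-type quantity, not an $\ell^{q_2}$-type one. This is why the paper dichotomizes on the finiteness of $\{B^jA^{-j}:j\in\Z\}$ \emph{before} extracting $q_1=q_2$ and $\alpha_1=\alpha_2$: in the finite case it first invokes \Cref{thm:sufficient} to replace $\TLtwop(B)$ by $\TLtwop(A)$, and only then runs the tower argument entirely within the $A$-tiling.

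The central step you leave open is precisely where the paper's new ideas lie, and it is not a routine completion. The obstacle you identify (handling uniformly the case where $A^{j}B^{-j}$ accumulates and the case where it escapes to infinity) is resolved by \Cref{lem:pairwisedisjoint}: if $\{B^jA^{-j}:j\in\Z\}$ is infinite, then for every $N$ there exist $j_1,\dots,j_N$ and $x_0$ with $B^{j_t}A^{-j_t}x_0$ pairwise distinct, because otherwise every orbit $\{B^jA^{-j}e_i\}$ of a basis vector would have fewer than $N$ elements and the matrix set would be finite; this makes no distinction between the two modes of infiniteness. Scaling then produces pairwise disjoint images $B^{j_t}A^{-j_t}\mathcal{B}_{R\varepsilon}(Rx_0)$. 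The test sequence of \Cref{thm:p=q} is supported on the $A$-tiles at scales $j_1,\dots,j_N$ meeting a single small ball $P_R$ around $Rx_0$, so that on the $A$-side all supports contain $P_R$ and lie in $\mathcal{B}_{R\varepsilon}(Rx_0)$ (yielding $\asymp(R\varepsilon)^{d/p}\|\cdot\|_{\ell^{q_1}}$), while on the $B$-side the supports are pairwise disjoint (yielding $\asymp(R\varepsilon)^{d/p}\|\cdot\|_{\ell^{p}}$); letting $N\to\infty$ forces $q_1=p$, and by symmetry $q_2=p$. Note also that the conclusion in the infinite case is $p=q_1=q_2$, i.e.\ case (ii) of the theorem, rather than a contradiction with $p\neq q$; your contrapositive formulation is equivalent, but without this construction the proof is incomplete at its core. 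Separately, the case $p=\infty$ requires its own treatment via \Cref{lem:borelsets} and the averaged quasi-norm \eqref{eq:TLAi}, which your proposal does not address.
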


\Cref{thm:sufficient_intro} and \Cref{thm:necessary_intro} provide a full classification
of the expansive dilation matrices yielding equal discrete Triebel-Lizorkin spaces,
and form a nontrivial extension of \Cref{thm:triebel} to arbitrary expansive dilations.
In addition, the necessary conditions for possibly different integrability and smoothness exponents
provided by \Cref{thm:necessary_intro} appear to be new even for diagonal dilation matrices.
Similarly, a classification of dilations for the case $p = \infty$ seems new for diagonal dilations.

As already mentioned above, the classification provided by \Cref{thm:sufficient_intro}
and \Cref{thm:necessary_intro} is different from the one for anisotropic Triebel-Lizorkin function spaces \eqref{eq:equivalent_matrices}.
To illustrate this, we recall that an expansive matrix $A \in \mathrm{GL}(d, \mathbb{R})$
is equivalent to the isotropic dilation matrix $2 \cdot I_d$ in the sense of \eqref{eq:equivalent_matrices}
if and only if $A$ is diagonalizable over $\mathbb{C}$ with all eigenvalues being equal in absolute value,
see, e.g., \cite[Example, p.~7]{bownik2003anisotropic}.
Combined with the classification of Triebel-Lizorkin function spaces \cite{koppensteiner2023classification},
except in the trivial case where $p \in (1,\infty)$, $q = 2$, and $\alpha = 0$,
an expansive $A \in \mathrm{GL}(d, \mathbb{R})$ therefore generates the isotropic Triebel-Lizorkin function space
$\dot{\mathbf{F}}^{\alpha}_{p,q}(A) = \dot{\mathbf{F}}^{\alpha}_{p,q}(2 \cdot I_d)$
if and only if $A$ is diagonalizable over $\mathbb{C}$ with all eigenvalues being equal in absolute value.
In contrast, an expansive matrix $A \in \mathrm{GL}(d, \mathbb{R})$ generates
the classical Triebel-Lizorkin sequence space $\TL(A) = \TL(2 \cdot I_d)$ for $p\neq q$
if and only if $A^k = 2^k \cdot I_d$ for some $k \in \mathbb{N}$.
In turn, this is equivalent to $A$ being diagonalizable over $\mathbb{C}$
and such that each eigenvalue is of the form $2z$ for some $z \in \mathbb{C}$
satisfying $z^k = 1$ for some $k \in \mathbb{N}$.
See \Cref{sec:spectral} for further details.

Our proofs of \Cref{thm:sufficient_intro} and \Cref{thm:necessary_intro} are elementary and essentially self-contained.
The condition that the set $\{A^j B^{-j} : j \in \Z\}$ is finite is equivalent to the set $\{B^j A^{-j} : j \in \Z\}$ being finite.
Using this, the central idea in our proof of \Cref{thm:sufficient_intro} is to partition the integers
$\Z = \bigcupdot_{1 \leq t \leq N} J_t$ into sets $J_t := \{ j \in \Z : B^j A^{-j} = M_t \}$
for matrices $M_t$, $1 \leq t \leq N$, where $N := \# \{ B^{j} A^{-j} : j \in \Z\}$.
This allows us to rewrite the (quasi-)norms \eqref{eq:TL_intro} in such a way
that by means of a change of variable the (quasi-)norm of $\TL(A)$ can be compared to that of $\TL(B)$.
For the case $p = \infty$, we use a characterization of the usual (quasi-)norm via
a local $q$-power function as shown in \cite{bownik2008duality} (see \Cref{lem:borelsets}).
The necessary condition provided by \Cref{thm:necessary_intro} requires significantly more work
than the proof of \Cref{thm:sufficient_intro}.
For proving \Cref{thm:necessary_intro}, we construct sequences $c \in \CC^{\Z \times \Z^d}$
that allow us to compare the (quasi-)norms of $\TLone(A)$ and $\TLtwo(B)$ to that
of some (weighted) $\ell^r$-spaces for suitable $r \in \{p_1, p_2, q_1, q_2\}$.
In combination with the equivalence of the (quasi-)norms of $\TLone(A)$ and $\TLtwo(B)$
this allows us then to show the coincidence of the integrability exponents.
Among these different cases, the proof of $p_1 = p_2 = q_1 = q_2$ (see \Cref{thm:necessary_intro}(ii))
is most difficult as it requires the construction of a sequence whose $\TLone(A)$-norm is comparable
to some (weighted) $\ell^{q_1}$-norm, whereas its $\TLtwo(B)$-norm should be
comparable to some (weighted) $\ell^{p_2}$-norm.
The construction of such sequences are based on some ideas underlying the proof of \Cref{thm:triebel}
as given in \cite{triebel2006theory} and form nontrivial adaptations of those sequences to general expansive matrices.

The organization of the paper is as follows: \Cref{sec:TL} provides basic notation and properties
for discrete Triebel-Lizorkin spaces that will be used throughout the paper.
In \Cref{sec:sufficient}, we provide a proof of \Cref{thm:sufficient_intro}.
The proof of \Cref{thm:necessary_intro} is given in \Cref{sec:necessary} and split into various subresults.
Finally, \Cref{sec:spectral} provides a characterization of expansive matrices $A$
for which $\TLA$ coincides with the isotropic spaces $\TLnaked_{p,q}^\alpha (2 \cdot I_d)$.

\subsection*{Notation}

Unless otherwise noted, $\| \cdot \|$ denotes the usual Euclidean norm on $\R^d$.
For a matrix $A \in \R^{k \times d}$, $\| A \|$ denotes the operator norm of $A$.
The open and closed Euclidean balls with radius $r>0$ and center $x \in \R^d$ are denoted
by $\mathcal{B}_r(x)$ and $\overline{\mathcal{B}}_r(x)$, respectively.
The $r$-neighborhood (resp.\ diameter) of a set $X \subseteq \R^d$ with respect to
the Euclidean distance is denoted by $\mathcal{B}_r(X) = \bigcup_{x \in X} \mathcal{B}_r (x)$ (resp.\ $\diam(X)$).
The standard basis of $\mathbb{C}^{\mathbb{Z} \times \mathbb{Z}^d}$ is denoted by
$(e_{j,k})_{j \in \mathbb{Z}, k \in \mathbb{Z}^d}$ and the Kronecker delta function
$\delta$ is as usual defined by $\delta_{i,i} = 1$ and $\delta_{i,j} = 0$ if $i \neq j$.

The cardinality of a set $M$ is denoted by $\#M$, with $\#M \in \mathbb{N}_0$
for a finite set and $\#M = \infty$ for an infinite set.
The Lebesgue measure of a measurable set $X \subseteq \mathbb{R}^d$ is denoted by $|X|$,
and integration of a measurable function $f : \R^d \to \mathbb{C}$ over $X$
is written as $\int_{X} f(x) \; dx$. For a set $X$ of finite positive measure,
we write $\dashint_X f(x) \; dx := |X|^{-1} \int_X f(x) \; dx$.

Given two functions $f_1, f_2 : X \to [0, \infty)$ on a set $X$,
we write $f_1 \lesssim f_2$ if there exists $C>0$ such that $f_1 (x) \leq C f_2 (x)$ for all $x \in X$.
We use the notation $f_1 \asymp f_2$ whenever $f_1 \lesssim f_2$ and $f_2 \lesssim f_1$.
Subscripted variants such as $f_1 \lesssim_{a,b} f_2$ indicate that the implicit constant depends only on quantities $a,b$.

\section{Discrete Triebel-Lizorkin spaces}
\label{sec:TL}

For an expansive matrix $A \in \mathrm{GL}(d, \mathbb{R})$, we define associated \emph{dilated cubes} by
\[
  Q^A_{j,k} := A^j ([0,1]^d + k), \quad j \in \mathbb{Z}, \; k \in \mathbb{Z}^d.
\]
The \emph{scale} of a dyadic cube $Q^A_{j,k}$ is defined by
$\scale(Q^A_{j,k}) = \scale_A (Q^A_{j,k}) = \log_{|\det(A)|} (|Q^A_{j,k}|)$.
We denote the family of all dyadic cubes associated to $A$ by $\mathcal{Q}^A = \{Q^A_{j,k} : j \in \mathbb{Z}, k \in \mathbb{Z}^d \}$.

For $\alpha \in \mathbb{R}$ and $p,q  \in (0, \infty]$, the (homogeneous) anisotropic discrete
Triebel-Lizorkin space $\TLA$ is defined as the space of all sequences
$c \in \mathbb{C}^{\mathbb{Z} \times \mathbb{Z}^d}$ satisfying $\| c \|_{\TLA} < \infty$, where
\[
  \| c \|_{\TLA}
  := \bigg\|
       \bigg(
         \sum_{j \in \mathbb{Z}}
           \sum_{k \in   \mathbb{Z}^d}
             \big(|\det(A)|^{-j( \alpha + 1/2) } |c_{j,k}| \Indicator_{Q_{j,k}^A} \big)^q
       \bigg)^{\frac{1}{q}}
     \bigg\|_{L^p}
\]
if $p < \infty$ (with the usual modification for $q = \infty$), and
\begin{align} \label{eq:TLAi}
  \| c \|_{\TLAi}
  := \sup_{P \in \mathcal{Q}^A}
       \bigg(\;
         \dashint_{P}
           \sum_{\substack{j \in \mathbb{Z} \\ j \leq \scale(P)}}
             \sum_{ k \in \mathbb{Z}^d}
               \big(|\det(A)|^{-j(\alpha + 1/2)} |c_{j,k}| \Indicator_{Q^A_{j,k}} (x) \big)^q
         \; dx
       \bigg)^{\frac{1}{q}},
\end{align}
where the case $q = \infty$ in \eqref{eq:TLAi} has to be interpreted as
\[
  \| c \|_{\TLAii} := \sup_{j \in \mathbb{Z}, k \in \mathbb{Z}^d} |\det (A)|^{-j(\alpha + 1/2)} |c_{j,k}|;
\]
see \cite{bownik2006atomic, bownik2007anisotropic, bownik2008duality} for various basic properties.

In order to give similar proofs for the cases $p < \infty$ and $p = \infty$,
we will often use the following equivalent (quasi-)norms.
The lemma is a direct consequence of a characterization of $\TLAi$ in terms of a so-called local $q$-power function.
See \cite[Corollary 3.4]{bownik2008duality} for a proof.

\begin{lemma}[\cite{bownik2008duality}] \label{lem:borelsets}
Let $\alpha \in \R$ and $p, q \in (0, \infty]$.
Fix $0 < \varepsilon <1$.
Then
\(
  \| c \|_{\TLA} \asymp \| c \|_{\TLA}^\ast
\)
for any sequence $c \in \CC^{\Z \times \Z^d}$, where
\[
  \| c \|_{\TLA}^\ast
  := \inf
     \bigg\{
       \bigg\|
         \bigg(
           \sum_{j \in \Z}
             \sum_{k \in \Z^d}
               \big( |\det(A)|^{-j(\alpha + 1/2)} |c_{j,k}| \Indicator_{E_{j,k}} \big)^q
         \bigg)^{\frac{1}{q}}
       \bigg \|_{L^p}
       :
       E_{j,k} \subseteq Q^A_{j,k} , \; \frac{|E_{j,k}|}{|Q^A_{j,k}|} > \varepsilon
     \bigg\}
\]
where $E_{j,k} \subseteq Q^A_{j,k}$ are Borel sets, with the usual modification for $q = \infty$.
The implicit constant is independent of the sequence $c$.
\end{lemma}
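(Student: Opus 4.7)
\smallskip

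The plan is to prove the two inequalities separately. The lower bound
\[
 \| c \|_{\TLA} \;\gtrsim\; \inf\bigg\{ \bigg\|\Big(\textstyle \sum_{j,k} (|\det(A)|^{-j(\alpha+1/2)}|c_{j,k}|\Indicator_{E_{j,k}})^q\Big)^{1/q}\bigg\|_{L^p} : \ldots \bigg\}
\]
is immediate, because the admissible family $E_{j,k} := Q^A_{j,k}$ satisfies the density condition $|E_{j,k}|/|Q^A_{j,k}| = 1 > \varepsilon$ and the resulting expression is exactly $\| c \|_{\TLA}$, so the infimum is no larger.

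For the reverse inequality, fix an admissible family $\{E_{j,k}\}$. The key pointwise estimate is that whenever $x \in Q^A_{j,k}$ and $r > 0$,
\[
 M_r(\Indicator_{E_{j,k}})(x) \;\geq\; \bigg( \dashint_{Q^A_{j,k}} \Indicator_{E_{j,k}} \bigg)^{1/r} \;>\; \varepsilon^{1/r},
\]
where $M_r f := (M(|f|^r))^{1/r}$ and $M$ denotes the anisotropic Hardy–Littlewood maximal operator associated to $A$ (averages over dilated cubes in $\mathcal{Q}^A$, or over an anisotropic quasi-norm ball, as in \cite{bownik2007anisotropic}). Since $\Indicator_{Q^A_{j,k}}$ vanishes off $Q^A_{j,k}$, this gives globally
\[
 \Indicator_{Q^A_{j,k}}(x) \;\leq\; \varepsilon^{-1/r}\, M_r(\Indicator_{E_{j,k}})(x), \qquad x \in \R^d.
\]
In the regime $p < \infty$ choose $0 < r < \min(p,q)$ and invoke the anisotropic vector-valued Fefferman–Stein maximal inequality (as in \cite[Theorem 3.2]{bownik2008duality} or \cite{bownik2007anisotropic}) on the family $f_{j,k} := |\det(A)|^{-j(\alpha+1/2)}|c_{j,k}|\Indicator_{E_{j,k}}$, which yields
\[
 \bigg\|\Big(\textstyle \sum_{j,k} (|\det(A)|^{-j(\alpha+1/2)}|c_{j,k}|M_r\Indicator_{E_{j,k}})^q \Big)^{1/q}\bigg\|_{L^p} \;\lesssim\; \bigg\|\Big(\textstyle \sum_{j,k} (|\det(A)|^{-j(\alpha+1/2)}|c_{j,k}|\Indicator_{E_{j,k}})^q\Big)^{1/q}\bigg\|_{L^p}.
\]
Combining the two previous displays and taking the infimum over admissible $\{E_{j,k}\}$ gives the desired bound with an implicit constant depending only on $\varepsilon, p, q, A$.

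For the case $p = \infty$ (which is more delicate because $\| \cdot \|_{\TLAi}$ is defined via a supremum of averages of a partial sum over cubes $P \in \mathcal{Q}^A$), the strategy is to appeal directly to the characterization of $\TLAi$ in terms of the local $q$-power function given in \cite[Corollary 3.4]{bownik2008duality}. There, $\| c \|_{\TLAi}$ is equivalent to the same supremum but with the inner indicators $\Indicator_{Q^A_{j,k}}$ replaced by arbitrary Borel subsets of prescribed lower density; the pointwise bound $\Indicator_{Q^A_{j,k}} \leq \varepsilon^{-1/r} M_r \Indicator_{E_{j,k}}$ combined with the $L^\infty$-type bound on the local maximal operator (restricted to cubes $P$ of fixed scale) furnishes the reverse inequality in this case as well. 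The main obstacle is the simultaneous treatment of the endpoint cases $p = \infty$ and/or $q = \infty$: for $q = \infty$ the Fefferman–Stein step degenerates to the scalar maximal inequality, and the argument reduces to a direct Chebyshev-type estimate on each $Q^A_{j,k}$; for $p = \infty$ one must carefully respect the scale constraint $j \leq \scale(P)$ built into \eqref{eq:TLAi}, but this is precisely what the local $q$-power characterization of \cite{bownik2008duality} is designed to accommodate.
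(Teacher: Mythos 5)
Your argument for $p<\infty$ is correct and is in fact more self-contained than the paper's treatment: the paper proves nothing here, but simply cites \cite[Corollary 3.4]{bownik2008duality} and adds a remark reconciling the normalization $|E_{j,k}|^{-1/2}$ of that corollary with the $|Q^A_{j,k}|^{-1/2}$ used in the lemma (a point your direct argument sidesteps). For $p<\infty$, the pointwise bound $\Indicator_{Q^A_{j,k}} \leq \varepsilon^{-1/r} M_r(\Indicator_{E_{j,k}})$ followed by the anisotropic Fefferman--Stein inequality with exponents $p/r,q/r>1$ is the standard mechanism, and your reduction of the $q=\infty$ case to the scalar maximal inequality via $\sup_{j,k} M_r f_{j,k} \leq M_r(\sup_{j,k} f_{j,k})$ is fine.

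The genuine gap is in the case $p=\infty$, and it sits exactly where you declare things immediate. For $p=\infty$ the right-hand side of the lemma is the infimum over admissible families of the \emph{global} $L^\infty$-norm of the full square function, whereas $\|c\|_{\TLAi}$ is the Carleson-type quantity \eqref{eq:TLAi}, an average over $P$ with the scale restriction $j\leq\scale(P)$. Choosing $E_{j,k}=Q^A_{j,k}$ therefore does \emph{not} reproduce $\|c\|_{\TLAi}$: it produces $\big\|\big(\sum_{j,k}(|\det(A)|^{-j(\alpha+1/2)}|c_{j,k}|\Indicator_{Q^A_{j,k}})^q\big)^{1/q}\big\|_{L^\infty}$, which dominates $\|c\|_{\TLAi}$ but can be infinite while $\|c\|_{\TLAi}$ is finite. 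For instance, with $d=1$, $A=(2)$ and $c_{j,0}=|\det(A)|^{j(\alpha+1/2)}$ for $j\leq 0$ (all other entries zero), the square function blows up like $(\#\{j\le 0: x<2^j\})^{1/q}$ near the origin, yet $\|c\|_{\TLAi}\asymp 2^{1/q}$ because each average $\dashint_{Q^A_{j_0,0}}\sum_{j\le j_0}\Indicator_{[0,2^j]}\,dx$ equals $2$. So for $p=\infty$ the direction $\inf\{\cdots\}\lesssim\|c\|_{\TLAi}$ is the hard half: one must \emph{construct} substantial subsets $E_{j,k}$ (a stopping-time selection, which is precisely the content of the local $q$-power function machinery behind \cite[Corollary 3.4]{bownik2008duality}), e.g.\ $E_{j,0}=[2^{j-1},2^j]$ in the example above. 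Your $p=\infty$ paragraph only addresses the opposite direction and otherwise defers to that corollary --- which is legitimate, and is exactly what the paper does for all $p$ --- but then the allegedly immediate half must also be extracted from the corollary rather than from the trivial choice $E_{j,k}=Q^A_{j,k}$.
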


\begin{remark}
 Strictly speaking, the statement of \cite[Corollary 3.4]{bownik2008duality} provides
 a (quasi-)norm characterization in terms of the $L^2$-normalized indicators
 $|E_{j,k}|^{-1/2} \Indicator_{E_{j,k}}$ rather than the functions
 $|Q_{j,k}^A|^{-1/2} \Indicator_{E_{j,k}}$ appearing in the statement of \Cref{lem:borelsets}.
 However, by using that $\varepsilon < |E_{j,k}|/|Q^A_{j,k}| \leq 1$,
 the (quasi-)norm characterization provided by \Cref{lem:borelsets} is easily seen
 to be equivalent to \cite[Corollary 3.4]{bownik2008duality}.
\end{remark}

The following basic properties of discrete Triebel-Lizorkin spaces appear to be well-known.
However, as we will use both properties, but could not locate a proof,
we provide short arguments in the appendix.

\begin{lemma} \label{lem:basic}
Let $A \in \GL(d, \R)$ be expansive, $\alpha \in \R$ and $p,q\in (0, \infty]$.
The following assertions hold:
\begin{enumerate}
\item[(i)] The space $\TLA$ is complete with respect to the quasi-norm $\| \cdot \|_{\TLA}$,
           and continuously embedded into $\CC^{\Z \times \Z^d}$
           equipped with the topology of pointwise, i.e., componentwise, convergence.
 \item[(ii)] If $r := \min\{1, p, q\}$, then for any $\eps \in (0,1)$, the quasi-norm $\| \cdot \|_{\TLA}^\ast$
             introduced in \Cref{lem:borelsets} satisfies
             \[
               \bigl(\| a + b \|_{\TLA}^\ast\bigr)^r
               \leq \bigl(\| a \|_{\TLA}^\ast\bigr)^r + \bigl(\| b \|^\ast_{\TLA}\bigr)^r
             \]
             for all $a, b \in \TLA$.
\end{enumerate}
\end{lemma}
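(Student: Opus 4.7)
The plan for part (ii) is to first establish a pointwise $r$-inequality for the $\ell^q$-type integrand appearing in the definition of $\|\cdot\|_{\TLA}$, and then pass this bound to the outer $L^p$ norm (or supremum over dyadic cubes, when $p = \infty$). Writing $F_j := |\det(A)|^{-j(\alpha+1/2)}$ and
\[
 \psi_c(x) := \bigg( \sum_{j \in \Z} \sum_{k \in \Z^d} \big( F_j |c_{j,k}| \Indicator_{Q^A_{j,k}}(x) \big)^q \bigg)^{1/q},
\]
with the usual modification for $q = \infty$, I aim to derive $\psi_{a+b}(x)^r \leq \psi_a(x)^r + \psi_b(x)^r$ pointwise in $x$. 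When $q \leq 1$, this follows from $q$-subadditivity of $t \mapsto t^q$ combined with the inequality $(u+v)^{r/q} \leq u^{r/q} + v^{r/q}$, which is available since $r \leq q \leq 1$. When $q \geq 1$, Minkowski's inequality in $\ell^q$ with respect to counting measure on $\Z \times \Z^d$ gives $\psi_{a+b} \leq \psi_a + \psi_b$, and raising to the power $r \leq 1$ produces the desired pointwise bound. For $p < \infty$, the triangle inequality in $L^{p/r}$ (applicable since $p/r \geq 1$) then finishes the case. For $p = \infty$ and $q < \infty$, the same argument is carried out inside each averaged integral $\dashint_P$ on a dyadic cube $P$, and the inequality is preserved under the subsequent supremum since $\sup$ is $r$-subadditive. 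The case $p = q = \infty$ is immediate.

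For part (i), continuous embedding into pointwise convergence follows from elementary lower bounds of the form $|c_{j_0, k_0}| \lesssim_{j_0, k_0} \|c\|_{\TLA}$, obtained by restricting the defining integral or supremum to a single summand: for $p < \infty$ by integrating only over $Q^A_{j_0, k_0}$; for $p = \infty$ and $q < \infty$ by choosing $P := Q^A_{j_0, k_0}$ in definition \eqref{eq:TLAi}; and for $p = q = \infty$ directly. Completeness is then established by a standard Fatou argument. Given a Cauchy sequence $(c^{(n)})_{n \in \N}$ in $\TLA$, the continuous embedding yields pointwise Cauchyness, hence pointwise convergence to some $c \in \CC^{\Z \times \Z^d}$. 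Applying Fatou's lemma to $\psi^p_{c^{(n)} - c^{(m)}}$ (or, in the case $p = \infty$, first to the sum and averaged integral on a fixed cube $P$) and using the termwise pointwise convergence $|c^{(n)}_{j,k} - c^{(m)}_{j,k}|^q \to |c^{(n)}_{j,k} - c_{j,k}|^q$ as $m \to \infty$ gives
\[
 \| c^{(n)} - c \|_{\TLA} \leq \liminf_{m \to \infty} \| c^{(n)} - c^{(m)} \|_{\TLA}.
\]
The right-hand side tends to $0$ as $n \to \infty$ by the Cauchy property, so $c^{(n)} \to c$ in $\TLA$, and membership $c \in \TLA$ follows from the $r$-inequality of part (ii).

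The main technical obstacle is the case $p = \infty$, in which the quasi-norm is a supremum over all dyadic cubes $P \in \CalQ^A$ and pointwise convergence does not interact in an obvious way with the averages $\dashint_P$. The remedy is to first fix $P$ and apply Fatou to both the countable sum over $(j,k)$ with $j \leq \scale(P)$ (against counting measure) and to the integral over $P$ (against normalized Lebesgue measure), and only at the very end to take the supremum over $P$. An alternative, more uniform route would be to invoke the Borel-set characterization of \Cref{lem:borelsets} and treat every value of $p$ as a weighted $L^p(\ell^q)$ norm of indicator sequences, but the direct approach appears to be the shortest.
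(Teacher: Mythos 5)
Your proof is correct and follows essentially the same route as the paper: a Fatou-type argument yields completeness and the pointwise lower bounds give the continuous embedding in (i), while (ii) rests on the $r$-th power trick together with the triangle inequalities in $\ell^{q/r}$ and $L^{p/r}$. The only cosmetic differences are that the paper channels the case $p=\infty$ of (ii) through the Borel-set characterization of \Cref{lem:borelsets} rather than working directly with \eqref{eq:TLAi}, and for (i) it invokes the general fact that the Fatou property implies completeness instead of running the Cauchy-sequence argument by hand.
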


\section{Sufficient condition} \label{sec:sufficient}

The aim of this section is to prove the sufficient condition (\Cref{thm:sufficient_intro})
for the coincidence of discrete Triebel-Lizorkin spaces. Before doing so, we show the following
simple lemma that provides different equivalent formulations of this sufficient condition.

\begin{lemma} \label{lem:characterization}
  Let $A,B \in \GL(d, \R)$. Then the following are equivalent:
  \begin{enumerate}[label=(\roman*)]
    \item The set $\{B^j A^{-j} : j \in \Z \}$ is finite;
    \item The set $\{ A^j B^{-j} \,\,:\,\, j \in \Z \}$ is finite;
    \item There exists $k \in \N$ such that $A^k = B^k$.
  \end{enumerate}
\end{lemma}

\begin{proof}
The equivalence of (i) and (ii) is immediate since $(A^j B^{-j})^{-1} = B^j A^{-j}$,
so it remains to show the equivalence of (ii) and (iii).
For this, let $F := \{ A^j B^{-j} \,\,:\,\, j \in \Z \}$ and suppose that $F$ is finite.
Then the map $j \mapsto A^j B^{-j}$ cannot be injective from $\N$ to $F$, and
hence there exist $j,\ell \in \N$ with $j \neq \ell$ and $A^j B^{-j} = A^\ell B^{-\ell}$.
Without loss of generality, we may assume that $j > \ell$.
Then we also get that $A^j = A^{\ell} B^{j - \ell}$,
and thus $A^{j - \ell} = B^{j - \ell}$, so that setting $k := j - \ell \in \N$ shows (iii).

Conversely, suppose there exists $k \in \N$ with $A^k = B^k$.
Then induction shows that $A^{k \ell} = B^{k \ell}$ for all $\ell \in \N$,
and hence also $A^{-k \ell} = B^{-k \ell}$ for $\ell \in \N$,
which shows that $A^{k \ell} = B^{k \ell}$ for all $\ell \in \Z$.
Let $j \in \Z$ be arbitrary.
Then $j = \ell k + r$ for suitable $\ell \in \Z$ and $r \in \{ 0,\dots,k-1 \}$, and thus
\[
  A^j B^{-j}
  = A^r A^{\ell k} B^{-\ell k} B^{-r}
  = A^r B^{-r}
  .
\]
This shows that
\(
  \bigl\{ A^j B^{-j} \,\,:\,\, j \in \Z \bigr\}
  \subseteq \bigl\{ A^r B^{-r} \,\,:\,\, r \in \{ 0,\dots,k-1 \} \bigr\}
\)
is finite.
\end{proof}

The following theorem corresponds to \Cref{thm:sufficient_intro}.

\begin{theorem} \label{thm:sufficient}
If $A, B \in \GL(d, \R)$ are two expansive matrices such that $\{ A^j B^{-j} \,\,:\,\, j \in \Z \}$
is a finite set, then $\TLA = \TLB$ for all $\alpha \in \R$ and $p,q \in (0, \infty]$.
\end{theorem}

\begin{proof}
We will use that $\{ A^j B^{-j} \,\,:\,\, j \in \Z \}$ is finite
if and only if $\{ B^{j} A^{-j} : j \in \Z\}$ is finite; see \Cref{lem:characterization}.
Again by \Cref{lem:characterization}, there exists $k \in \N$ with $A^k = B^k$,
so that $(\det A)^k = (\det B)^k$ and hence $|\det A| = |\det B|$.
Let $N := \# \{ B^{j} A^{-j} : j \in \Z\}$ and write $\{ B^{j} A^{-j} : j \in \Z\} = \{ M_1, \dots, M_N \}$
for necessarily pairwise distinct invertible matrices $M_1, \dots, M_N$.
For $t \in \mathbb{N}$ with $t \leq N$, let $J_t := \{ j \in \Z : B^{j} A^{-j} = M_t \}$,
and note that $\Z = \bigcupdot_{1 \leq t \leq N} J_t$.

We split the proof into the cases $p < \infty$ and $p = \infty$.
\\~\\
\textbf{Case 1.} Let $p < \infty$. If $q < \infty$, then for arbitrary $c \in \CC^{\Z \times \Z^d}$,
\begin{align*}
  \| c \|_{\TLB}
  &= \bigg\|
       \bigg(
         \sum_{j \in \Z}
           |\det(B)|^{-jq(\alpha + 1/2)}
           \sum_{k \in \Z^d}
             |c_{j,k}|^q \Indicator_{B^j ([0,1]^d+k)}
       \bigg)^{\frac{1}{q}}
     \bigg\|_{L^p} \\
  &\asymp_{p,q, N} \sum_{t = 1}^N
                     \bigg\|
                       \bigg(
                         \sum_{j \in J_t}
                           |\det (A)|^{-jq (\alpha + 1/2)}
                           \sum_{k \in \Z^d}
                             |c_{j,k}|^q \Indicator_{M_t A^j ([0,1]^d + k)}
                       \bigg)^{\frac{1}{q}}
                     \bigg\|_{L^p}.
\end{align*}
Using that $\Indicator_{M_t A^j ([0,1]^d +k)} = \Indicator_{ A^j ([0,1]^d +k)} (M_t^{-1} \cdot)$,
a change of variable gives
\begin{align*}
  \| c \|_{\TLB}
  &\asymp_{p, q, N} \sum_{t = 1}^N
                      |\det (M_t)|^{1/p}
                      \bigg\|
                        \bigg(
                          \sum_{j \in J_t}
                            |\det (A)|^{-jq (\alpha + 1/2)}
                            \sum_{k \in \Z^d}
                              |c_{j,k}|^q \Indicator_{A^j ([0,1]^d + k)}
                        \bigg)^{\frac{1}{q}}
                      \bigg\|_{L^p} \\
  &\asymp_{p, A, B} \bigg\|
                      \bigg(
                        \sum_{j \in \Z}
                          |\det(A)|^{-jq(\alpha + 1/2)}
                          \sum_{k \in \Z^d}
                            |c_{j,k}|^q \Indicator_{A^j ([0,1]^d+k)}
                      \bigg)^{\frac{1}{q}}
                    \bigg\|_{L^p} \\
  &= \| c \|_{\TLA},
\end{align*}
where the second step used that $|\det (M_t)| \asymp 1$ for an implicit constant independent of $t$.

The case $q = \infty$ follows by similar arguments:
For arbitrary $c \in \CC^{\Z \times \Z^d}$ and $x \in \R^d$,,
\begin{align*}
  & \sup_{j \in \Z} \sup_{k \in \Z^d} |\det (B)|^{-j(\alpha + 1/2)} |c_{j,k}| \Indicator_{B^j([0,1]^d + k)} (x) \\
  &\quad \quad \asymp_{N}
               \sum_{t = 1}^N
                 \sup_{j \in J_t}
                   \sup_{k \in \Z^d}
                     |\det (B)|^{-j(\alpha + 1/2)} |c_{j,k}| \Indicator_{B^j([0,1]^d + k)} (x),
\end{align*}
so that a change of variable yields
\begin{align*}
  \| c \|_{\TLnaked^{\alpha}_{p, \infty}(B)}
  &= \bigg\|
       \sup_{j \in \Z}
         \sup_{k \in \Z^d}
           |\det (B)|^{-j(\alpha + 1/2)} |c_{j,k}| \Indicator_{B^j([0,1]^d + k)}
     \bigg\|_{L^p} \\
   &\asymp_{p, N} \sum_{t = 1}^N
                    \bigg\|
                      \sup_{j \in J_t}
                        \sup_{k \in \Z^d}
                          |\det (B)|^{-j(\alpha + 1/2)} |c_{j,k}| \Indicator_{A^j([0,1]^d + k)} (M^{-1}_t \cdot)
                    \bigg\|_{L^p} \\
   &\asymp_{p, A, B} \bigg\|
                       \sup_{j \in \Z}
                         \sup_{k \in \Z^d}
                           |\det (A)|^{-j(\alpha + 1/2)} |c_{j,k}| \Indicator_{A^j([0,1]^d + k)}
                     \bigg\|_{L^p} \\
   &= \| c \|_{\TLnaked^{\alpha}_{p,\infty} (A)},
\end{align*}
as required.
\\~\\
\textbf{Case 2.} Let $p = \infty$.
If $q < \infty$ and $0 < \varepsilon < 1$, then \Cref{lem:borelsets} yields, for $c \in \CC^{\Z \times \Z^d}$,
\begin{align*}
 & \| c \|_{\TLAi} \\
 & \asymp \sum_{t = 1}^N
            \inf
            \bigg\{
              \bigg\|
                \bigg(
                  \sum_{j \in J_t}
                    \sum_{k \in \Z^d}
                      \big( |\det(A)|^{-j(\alpha + 1/2)} |c_{j,k}| \Indicator_{E_{j,k}} \big)^q
                \bigg)^{\frac{1}{q}}
              \bigg \|_{L^{\infty}} : E_{j,k} \subseteq Q^A_{j,k} , \; \frac{|E_{j,k}|}{|Q^A_{j,k}|} > \varepsilon
            \bigg\}.
\end{align*}
Given a Borel set $E_{j,k} \subseteq Q_{j,k}^A$ with $|E_{j,k}|/|Q^A_{j,k}| > \varepsilon$,
define $E^*_{j,k} := A^{-j} E_{j,k} - k \subseteq [0, 1]^d$,
so that $E_{j,k} = A^j (E_{j,k}^* + k) = M_t^{-1} B^j (E_{j,k}^* + k) $ for $j \in J_t$,
and $|E^*_{j,k}| > \varepsilon$.
Then, using that
\[
  \Indicator_{E_{j,k}}
  = \Indicator_{M^{-1}_t  B^j (E^*_{j,k} + k)}
  = \Indicator_{ B^j (E^*_{j,k} + k)}(M_t \cdot)
  \quad \text{for all} \quad j \in J_t, k \in \Z^d,
\]
a change of variable yields
\begin{align*}
 & \bigg\|
     \bigg(
       \sum_{j \in J_t} \sum_{k \in \Z^d} \big( |\det(A)|^{-j(\alpha + 1/2)} |c_{j,k}| \Indicator_{E_{j,k}} \big)^q
     \bigg)^{\frac{1}{q}}
   \bigg \|_{L^{\infty}} \\
 & \quad \quad = \bigg\|
                   \bigg(
                     \sum_{j \in J_t}
                       \sum_{k \in \Z^d}
                         \big( |\det(A)|^{-j(\alpha + 1/2)} |c_{j,k}| \Indicator_{B^j(E^*_{j,k} + k)} \big)^q
                   \bigg)^{\frac{1}{q}}
                 \bigg \|_{L^{\infty}}.
\end{align*}
As $E_{j,k}$ runs through all subsets $E_{j,k} \subseteq Q^A_{j,k}$
with $|E_{j,k}|/|Q^A_{j,k}| > \varepsilon$, the set $E'_{j,k} := B^j (E^*_{j,k} + k)$
runs through all subsets $E'_{j,k} \subseteq Q_{j,k}^B$ with $|E'_{j,k}|/|Q^B_{j,k}| > \varepsilon$.
Thus, a combination of the above equivalences, together with $|\det(A)| = |\det (B)|$
and another application of \Cref{lem:borelsets}, yields
\begin{align*}
 &\| c \|_{\TLAi} \\
 &\asymp \sum_{t = 1}^N
           \inf
           \bigg\{
             \bigg\|
               \bigg(
                 \sum_{j \in J_t} \sum_{k \in \Z^d} \big( |\det(B)|^{-j(\alpha + 1/2)} |c_{j,k}| \Indicator_{F_{j,k}} \big)^q
               \bigg)^{\frac{1}{q}}
             \bigg \|_{L^{\infty}}
             :
             F_{j,k} \subseteq Q^B_{j,k} , \; \frac{|F_{j,k}|}{|Q^B_{j,k}|} > \varepsilon
           \bigg\} \\
 &\asymp \| c \|_{\TLnaked^{\alpha}_{\infty, q} (B)},
\end{align*}
which shows the claim for $q<\infty$.

The remaining case $q = \infty$ follows immediately from the fact that $|\det(A)| = |\det (B)|$.
\end{proof}

\section{Necessary condition}
\label{sec:necessary}

This section is devoted to proving  \Cref{thm:necessary_intro}.
We start by proving some simple lemmas that will be used in various parts of the proof.

\subsection{Two lemmas}

The first lemma provides two simple consequences of the coincidence of spaces.

\begin{lemma} \label{lem:basic_concidence}
Let $A, B \in \GL(d, \R)$ be expansive matrices, $\alpha_1, \alpha_2 \in \R$ and $p_1, p_2, q_1, q_2 \in (0, \infty]$.

If $\TLone(A) = \TLtwo(B)$, then the following assertions hold:
\begin{enumerate}
 \item[(i)] There exists $C \geq 1$ such that
            \[
              \frac{1}{C} \| c \|_{\TLone(A)} \leq \| c \|_{\TLtwo(B)} \leq C \| c \|_{\TLone(A)}
            \]
            for all $c \in \CC^{\Z \times \Z^d}$;
\item[(ii)] $|\det (A)|^{\alpha_1 + \frac{1}{2} - \frac{1}{p_1}} = |\det (B)|^{\alpha_2 + \frac{1}{2} - \frac{1}{p_2}}$.
\end{enumerate}
\end{lemma}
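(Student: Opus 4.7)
The plan is to derive (i) from an abstract closed graph argument using the functional-analytic structure of discrete Triebel-Lizorkin spaces recorded in \Cref{lem:basic}, and then to extract (ii) by testing (i) against the single-element sequences $\delta_{(j_0, k_0)}$ and examining the asymptotics as $j_0 \to \pm \infty$.

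\textbf{Step 1 (two-sided bound).} By \Cref{lem:basic}(ii), if $r := \min\{1, p_1, q_1, p_2, q_2\}$, the functionals $\|\cdot\|_{\TLone(A)}^r$ and $\|\cdot\|_{\TLtwo(B)}^r$ satisfy the triangle inequality. Combined with the completeness asserted in \Cref{lem:basic}(i), this makes each of $\TLone(A)$ and $\TLtwo(B)$ a complete metrizable topological vector space (an F-space) when equipped with the translation-invariant metrics $d_A(c, c') = \| c - c' \|_{\TLone(A)}^r$ and $d_B(c, c') = \| c - c' \|_{\TLtwo(B)}^r$, respectively. The assumption $\TLone(A) = \TLtwo(B)$ as sets lets us consider the identity map $\iota : \TLone(A) \to \TLtwo(B)$, and its graph is closed: if $c_n \to c$ in $\TLone(A)$ and $c_n \to d$ in $\TLtwo(B)$, then the continuous embedding of each space into $\CC^{\Z \times \Z^d}$ with its product topology (as asserted in \Cref{lem:basic}(i)) forces $c_{n; j,k} \to c_{j,k}$ and $c_{n; j,k} \to d_{j,k}$ pointwise, hence $c = d$. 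The closed graph theorem for F-spaces therefore yields $C_1 > 0$ with $\| c \|_{\TLtwo(B)} \leq C_1 \| c \|_{\TLone(A)}$ for all $c$. Applying the same argument to $\iota^{-1}$ provides the reverse bound, so (i) holds with $C := \max\{C_1, C_1'\}$.

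\textbf{Step 2 (norm of a single-element sequence).} Fix $(j_0, k_0) \in \Z \times \Z^d$ and let $\delta^{(j_0,k_0)} \in \CC^{\Z \times \Z^d}$ be the sequence with value $1$ at $(j_0, k_0)$ and $0$ elsewhere. I claim
\[
 \big\| \delta^{(j_0, k_0)} \big\|_{\TLA} \;=\; |\det(A)|^{- j_0 (\alpha + 1/2 - 1/p)}
\]
for every $\alpha \in \R$, $p, q \in (0, \infty]$, with the convention $1/\infty = 0$. For $p < \infty$ and any $q$, only the single indicator $\Indicator_{Q^A_{j_0,k_0}}$ survives in the defining sum, so the inner $\ell^q$-norm reduces to $|\det(A)|^{-j_0(\alpha+1/2)} \Indicator_{Q^A_{j_0,k_0}}$, whose $L^p$-norm is $|\det(A)|^{-j_0(\alpha+1/2)} |Q^A_{j_0, k_0}|^{1/p} = |\det(A)|^{-j_0(\alpha + 1/2 - 1/p)}$. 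For $p = \infty$, $q = \infty$, the identity is immediate from the definition. For $p = \infty$ and $q < \infty$, using the definition \eqref{eq:TLAi} with the dyadic cube $P$ of scale $s \geq j_0$ intersecting $Q^A_{j_0, k_0}$, one computes the averaged integral as $|\det(A)|^{-j_0 q(\alpha+1/2)} \cdot |P \cap Q^A_{j_0, k_0}|/|P|$, which is maximized precisely by $P = Q^A_{j_0, k_0}$ since enlarging the cube only shrinks the density ratio. Taking the $q$-th root yields the claim.

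\textbf{Step 3 (from (i) to (ii)).} Apply the two-sided bound from Step 1 to $c := \delta^{(j_0, k_0)}$ with the values computed in Step 2:
\[
 \frac{1}{C} \, |\det(A)|^{- j_0 (\alpha_1 + 1/2 - 1/p_1)} \;\leq\; |\det(B)|^{- j_0 (\alpha_2 + 1/2 - 1/p_2)} \;\leq\; C \, |\det(A)|^{- j_0 (\alpha_1 + 1/2 - 1/p_1)}.
\]
Writing $\gamma_A := (\alpha_1 + 1/2 - 1/p_1) \ln |\det(A)|$ and $\gamma_B := (\alpha_2 + 1/2 - 1/p_2) \ln |\det(B)|$, taking logarithms, dividing by $|j_0|$, and letting $j_0 \to +\infty$ and $j_0 \to -\infty$ in turn, the $O(1)$ correction $\ln C$ vanishes and one obtains $\gamma_A = \gamma_B$. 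Exponentiating gives exactly $|\det(A)|^{\alpha_1 + 1/2 - 1/p_1} = |\det(B)|^{\alpha_2 + 1/2 - 1/p_2}$, as required.

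The only delicate point is the abstract argument in Step 1, where one must invoke the closed graph theorem in the non-locally-convex setting of quasi-Banach spaces; once the $r$-triangle inequality from \Cref{lem:basic}(ii) upgrades both spaces to F-spaces and the embedding into the pointwise topology gives closed graph, the rest is routine.
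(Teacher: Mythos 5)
Your proposal is correct and follows essentially the same route as the paper: part (i) via the closed graph theorem for the $F$-space structures furnished by \Cref{lem:basic}, and part (ii) by testing the resulting two-sided norm equivalence on the delta sequences $\delta^{(j_0,k_0)}$ and letting $j_0 \to \pm\infty$. The only (immaterial) difference is that for $p=\infty$, $q<\infty$ you compute $\| \delta^{(j_0,k_0)} \|$ exactly from the defining supremum \eqref{eq:TLAi}, whereas the paper invokes \Cref{lem:borelsets} and settles for a two-sided estimate; both suffice.
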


\begin{proof}
(i) The proof is analogous that of function spaces \cite[Lemma 5.2]{koppensteiner2023classification}.

If $\TLone(A) = \TLtwo(B)$, then the identity map $\iota : \TLone(A) \to \TLtwo(B)$
given by $c \mapsto c$ is well-defined, and it follows from the continuous embeddings
$\TLone(A), \TLtwo(B) \hookrightarrow \CC^{\Z \times \Z^d}$ (cf.\ \Cref{lem:basic})
that the graph of $\iota$ is closed.
Let $r := \min \{ 1, p_1, q_1, p_2, q_2 \}$.
Since $\TLone(A)$ and $\TLtwo(B)$ are complete with respect to their \enquote{natural} (quasi-)norms,
they are also complete, respectively, with respect to the equivalent $r$-norms
$\| \cdot \|_{\TLone(A)}^\ast$ and $\| \cdot \|_{\TLtwo(B)}^\ast$ introduced in \Cref{lem:borelsets};
cf.\ \Cref{lem:basic}.
Therefore, an application of the closed graph theorem (see, e.g., \cite[Theorem 2.15]{RudinFA})
implies that
\(
  \| c \|_{\TLtwo(B)}
  \asymp \| c \|_{\TLtwo(B)}^\ast
  \lesssim \| c \|_{\TLone(A)}^\ast \asymp \| c \|_{\TLone(A)}
\)
for all $c \in \CC^{\Z \times \Z^d}$.
The reverse inequality follows by symmetry.
\\~\\
(ii) Define $c := e_{j_0, 0}$ for $j_0 \in \Z$, where $(e_{j,k})_{j \in \mathbb{Z}, k \in \mathbb{Z}^d}$
denotes the standard basis for $\mathbb{C}^{\mathbb{Z} \times \mathbb{Z}^d}$.
If $p_1 < \infty$, then
\[
  \| c \|_{\TLone(A)}
  = \bigg\|
      |\det(A)|^{-{j_0} (\alpha_1 + \frac{1}{2})} \mathds{1}_{A^{j_0} [0, 1]^d}
    \bigg\|_{L^{p_1}}
  = |\det(A)|^{-j_0 (\alpha_1 + \frac{1}{2} - \frac{1}{p_1})},
\]
while if $p_1 = \infty$, then using that
\[
  \bigg\|
    |\det(A)|^{- j_0(\alpha_1 + \frac{1}{2})} \mathds{1}_{E_{j_0,0}}
  \bigg\|_{L^{\infty}}
  = |\det(A)|^{-j_0 (\alpha_1 + \frac{1}{2})}
  = |\det(A)|^{-j_0 (\alpha_1 + \frac{1}{2} - \frac{1}{p_1} )}
\]
for Borel sets $E_{j_0,0}$ of positive measure, it follows from an application of \Cref{lem:borelsets}
that $\| c \|_{\TLnaked^{\alpha_1}_{\infty, q_1}(A)} \asymp |\det(A)|^{-j_0 (\alpha_1 + \frac{1}{2} - \frac{1}{p_1})}$.
Similarly, $\| c \|_{\TLtwo(B)} \asymp |\det(B)|^{-j_0 (\alpha_2 + \frac{1}{2} - \frac{1}{p_2})}$.

Since $\| \cdot \|_{\TLone(A)} \asymp \| \cdot \|_{\TLtwo(B)}$ by assertion (i), it follows that
\[
  |\det(A)|^{-j_0 (\alpha_1 + \frac{1}{2} - \frac{1}{p_1})}
  \asymp |\det(B)|^{-j_0 (\alpha_2 + \frac{1}{2} - \frac{1}{p_2})}, \quad j_0 \in \Z,
\]
which easily implies the claim.
\end{proof}

\begin{lemma}\label{lem:pairwisedisjoint}
  Suppose $A, B \in \GL(d, \R)$ are expansive matrices such that $\{B^j A^{-j} : j \in \Z \}$ is infinite.
  Then, given any $N \in \N$, there exist $j_1, \dots, j_N \in \Z$ and $x_0 \in \R^d$ such that
  \[
   B^{j_t} A^{-j_t} x_0, \quad 1 \leq t \leq N,
  \]
  are pairwise distinct.
  Moreover, there exists $\varepsilon > 0$ such that
  \[
   B^{j_t} A^{-j_t} (x_0 + [-\eps, \eps]^d) , \quad 1 \leq t \leq N,
  \]
  are pairwise disjoint.
  In addition, for any $R>0$, the sets $B^{j_t} A^{-j_t} \mathcal{B}_{R\varepsilon} (R x_0)$,
  $1 \leq t \leq N$, are pairwise disjoint.
\end{lemma}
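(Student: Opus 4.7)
My plan is to reduce the problem to a linear-algebra argument by choosing the $j_t$ so that the matrices $M_t := B^{j_t} A^{-j_t}$ are pairwise distinct, and then picking $x_0$ generically so as to avoid the (finite union of) proper subspaces on which these matrices agree.

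First I would use the hypothesis that $\{B^j A^{-j} : j \in \Z\}$ is infinite to select $j_1,\dots,j_N \in \Z$ such that the matrices $M_t := B^{j_t} A^{-j_t}$, $1\leq t\leq N$, are pairwise distinct. For each pair $t\neq s$ the matrix $M_t - M_s$ is nonzero, so $\ker(M_t - M_s)$ is a proper linear subspace of $\R^d$. Since a finite union of proper subspaces of $\R^d$ cannot cover all of $\R^d$ (each has Lebesgue measure zero), I can choose $x_0 \in \R^d$ outside $\bigcup_{t\neq s} \ker(M_t - M_s)$. This $x_0$ then satisfies $M_t x_0 \neq M_s x_0$ for all $t\neq s$, which is precisely the first claim $B^{j_t}A^{-j_t} x_0$ pairwise distinct.

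Next I would produce the neighborhood $\varepsilon$. Set $\delta := \tfrac12 \min_{t\neq s} |M_t x_0 - M_s x_0| > 0$. For any $y \in [x_0 - \varepsilon, x_0+\varepsilon]^d$ one has $|y - x_0| \leq \varepsilon\sqrt{d}$, hence $|M_t y - M_t x_0| \leq \sqrt{d}\,\varepsilon \max_{1\leq t\leq N}\|M_t\|$. Choosing $\varepsilon > 0$ small enough that this last quantity is strictly less than $\delta$ ensures $M_t [x_0-\varepsilon, x_0+\varepsilon]^d \subseteq \mathcal{B}_\delta(M_t x_0)$ for every $t$, and the balls $\mathcal{B}_\delta(M_t x_0)$ are pairwise disjoint by the choice of $\delta$. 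This yields the second claim.

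For the third claim I would use the scaling identity $\mathcal{B}_{R\varepsilon}(R x_0) = R\cdot \mathcal{B}_{\varepsilon}(x_0)$, which follows immediately from $|y - Rx_0| < R\varepsilon \iff |y/R - x_0| < \varepsilon$. Since $\mathcal{B}_{\varepsilon}(x_0) \subseteq [x_0 - \varepsilon, x_0+\varepsilon]^d$, the sets $M_t \mathcal{B}_{\varepsilon}(x_0)$ are pairwise disjoint by the preceding paragraph. Linearity of $M_t$ and the above scaling then give
\[
 B^{j_t}A^{-j_t}\mathcal{B}_{R\varepsilon}(R x_0) \;=\; M_t\bigl(R \cdot \mathcal{B}_{\varepsilon}(x_0)\bigr) \;=\; R\cdot M_t \mathcal{B}_{\varepsilon}(x_0),
\]
so pairwise disjointness is preserved under scaling by $R$, finishing the proof.

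The only delicate point is the selection of $x_0$ in the first step, which uses the standard observation that a finite union of proper linear subspaces of $\R^d$ has empty interior (equivalently, Lebesgue measure zero). Everything else is a direct consequence of continuity of the linear maps $M_t$ and homogeneity of Euclidean balls.
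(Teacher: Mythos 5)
Your proof is correct, and for the first claim it takes a genuinely different route from the paper. The paper argues by contraposition: if for every choice of $x_0$ and indices the points $B^{j}A^{-j}x_0$ fail to contain $N$ distinct values, then each orbit $\{B^{j}A^{-j}e_i : j \in \Z\}$ of a standard basis vector is finite, and since a matrix is determined by its action on the basis, the set $\{B^{j}A^{-j} : j \in \Z\}$ would be finite --- a contradiction. You instead argue directly: select $j_1,\dots,j_N$ so that the matrices $M_t = B^{j_t}A^{-j_t}$ are pairwise distinct, and then choose $x_0$ outside the finite union of the proper subspaces $\ker(M_t - M_s)$, $t \neq s$, which cannot cover $\R^d$. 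Both arguments are elementary and correct; yours has the mild advantage of being constructive in spirit (it shows that almost every $x_0$ works for any fixed choice of distinct matrices), while the paper's contraposition avoids invoking the subspace-covering fact. The remaining two claims --- shrinking to $\delta := \tfrac12\min_{t\neq s}|M_t x_0 - M_s x_0|$, choosing $\varepsilon$ so that $\sqrt{d}\,\varepsilon\max_t\|M_t\| < \delta$, and then using linearity together with $\mathcal{B}_{R\varepsilon}(Rx_0) = R\cdot\mathcal{B}_{\varepsilon}(x_0) \subseteq R\cdot[x_0-\varepsilon,x_0+\varepsilon]^d$ --- follow the paper's proof essentially verbatim.
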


\begin{proof}
For proving the first claim, assume towards a contradiction that there do not exist
$j_1, \dots, j_N \in \Z$ and $x_0 \in \R^d$ such that the points $B^{j_t} A^{-j_t} x_0$, $ 1 \leq t \leq N$,
are pairwise distinct.
Then, for every $x_0 \in \R^d$, it follows that $\# \{ B^j A^{-j} x_0 : j \in \Z \} < N$.
In particular, this implies that the set $\{B^j A^{-j} e_i : j \in \Z \}$ is finite
for every standard basis vector $e_i$ with $1 \leq i \leq d$.
Setting $\mathcal{C}_i := \{B^j A^{-j} e_i : j \in \Z \}$, it follows that
$\# \{ B^j A^{-j} : j \in \Z \} \leq \Pi_{i = 1}^d \# \mathcal{C}_i < \infty$,
which is a contradiction.

For the remaining claims, let $j_1, \dots, j_N \in \Z$ and $x_0 \in \R^d$ be such that
$B^{j_t} A^{-j_t} x_0$, $ 1 \leq t \leq N$, are pairwise distinct.
Choose $R'>1$ such that $\max_{1 \leq t \leq N} \|B^{j_t} A^{-j_t} \| \leq R'$.
Moreover, choose some $\delta > 0$ satisfying
\[
  \delta < \frac{1}{2} \min_{t \neq t'} \| B^{j_t} A^{-j_t} x_0 - B^{j_{t'}} A^{-j_{t'}} x_0 \|,
\]
so that $\overline{\mathcal{B}}_{\delta} (B^{j_t} A^{-j_t} x_0)$, $1 \leq t \leq N$, are pairwise disjoint.
Then, choosing $0 < \varepsilon < \delta / (R' \sqrt{d})$ yields
\begin{align*}
  B^{j_t} A^{-j_t} (x_0 + [-\eps, \eps]^d)
  & \subseteq B^{j_t} A^{-j_t} \overline{\mathcal{B}}_{\varepsilon \sqrt{d}} (x_0)
    = B^{j_t} A^{-j_t} \overline{\mathcal{B}}_{ \varepsilon \sqrt{d}} (0) + B^{j_t} A^{-j_t} x_0 \\
  & \subseteq R' \overline{\mathcal{B}}_{ \varepsilon \sqrt{d}}(0) + B^{j_t} A^{-j_t} x_0
    \subseteq \mathcal{B}_{\delta} (B^{j_t} A^{-j_t} x_0).
\end{align*}
Lastly, note that
\[
  B^{j_t} A^{-j_t} \mathcal{B}_{R \varepsilon} (R x_0)
  = R \cdot (B^{j_t} A^{-j_t} \mathcal{B}_{\varepsilon} (x_0))
  \subseteq R \cdot \bigl(B^{j_t} A^{-j_t} (x_0 + [-\eps, \eps]^d)\bigr),
\]
which proves the final claim.
\end{proof}

\subsection{Key results}

In this section, we prove the  various necessary conditions for the coincidence
of discrete Triebel-Lizorkin spaces associated to possibly different exponents and dilation matrices.
For clarity, we prove these necessary conditions by establishing various subresults.

We start by showing that $p_1 = p_2$ whenever $\TLone(A) = \TLtwo(B)$.

\begin{proposition} \label{prop:p1=p2}
Let $A, B \in \GL(d, \R)$ be expansive, $\alpha_1, \alpha_2 \in \R$ and $p_1, p_2, q_1, q_2 \in (0, \infty]$.

If $\TLone(A) = \TLtwo(B)$, then $p:= p_1 = p_2$
and $|\det(A)|^{\alpha_1 + \frac{1}{2} - \frac{1}{p}} = |\det(B)|^{\alpha_2 + \frac{1}{2} - \frac{1}{p}}$.
\end{proposition}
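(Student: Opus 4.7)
The plan is to appeal to \Cref{lem:basic_concidence}(i), which provides the quasi-norm equivalence $\|\cdot\|_{\TLone(A)} \asymp \|\cdot\|_{\TLtwo(B)}$ on all of $\CC^{\Z \times \Z^d}$, and to test this equivalence against a family of elementary sequences that is sensitive only to the integrability exponent. Once $p_1 = p_2$ has been established, the determinant identity claimed in the proposition is exactly \Cref{lem:basic_concidence}(ii) with this common exponent.

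Concretely, for each $N \in \N$ I would pick a set $K_N \subseteq \Z^d$ of cardinality $N$ and consider the single-scale sequence $c^{(N)} \in \CC^{\Z \times \Z^d}$ defined by $c^{(N)}_{0,k} := 1$ for $k \in K_N$ and $c^{(N)}_{j,k} := 0$ otherwise. Because the cubes $Q^M_{0,k} = [0,1]^d + k$, $k \in \Z^d$, tile $\R^d$ up to a null set for any expansive $M \in \GL(d,\R)$, the inner $q$-power sum appearing in \eqref{eq:TL_intro} has at most one non-zero summand at each point. For $p \in (0,\infty)$ this collapses the quasi-norm to the $L^p$-norm of $\mathds{1}_{\bigcup_{k \in K_N} Q^M_{0,k}}$, yielding $\|c^{(N)}\|_{\TL(M)} = N^{1/p}$ independently of $\alpha$, $q$ and $M$. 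For $p = \infty$, the averaged $q$-sum in \eqref{eq:TLAi} over any $P \in \CalQ^M$ with $\scale(P) \geq 0$ is pointwise bounded by $1$, and this bound is attained by taking $P = Q^M_{0,k}$ for any $k \in K_N$, so that $\|c^{(N)}\|_{\TLAi} = 1 = N^{1/\infty}$. Alternatively, the $p = \infty$ computation can be read off directly from \Cref{lem:borelsets}.

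Inserting these two computations into $\|c^{(N)}\|_{\TLone(A)} \asymp \|c^{(N)}\|_{\TLtwo(B)}$ immediately produces $N^{1/p_1} \asymp N^{1/p_2}$ uniformly in $N \in \N$. Letting $N \to \infty$ forces $1/p_1 = 1/p_2$, i.e., $p_1 = p_2$, and the second claim of the proposition is then precisely \Cref{lem:basic_concidence}(ii) with this common exponent.

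The only (mild) obstacle is the uniform treatment of $p = \infty$, since \eqref{eq:TLAi} is not a plain $L^p$-integral; this is precisely what \Cref{lem:borelsets} is designed to absorb, so that the $p = \infty$ case runs in parallel with $p < \infty$.
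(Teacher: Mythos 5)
Your proposal is correct and follows essentially the same route as the paper: both test the quasi-norm equivalence from \Cref{lem:basic_concidence}(i) against single-scale sequences supported at $j=0$, for which the cubes $Q^M_{0,k}=[0,1]^d+k$ are independent of the dilation and essentially disjoint, so the Triebel--Lizorkin quasi-norm collapses to an $\ell^{p}$-quantity over $k$ (handling $p=\infty$ via \eqref{eq:TLAi} or \Cref{lem:borelsets}), after which the determinant identity is \Cref{lem:basic_concidence}(ii). The only cosmetic difference is that the paper uses arbitrary sequences $(a_k)_k$ and deduces $\|a\|_{\ell^{p_1}}\asymp\|a\|_{\ell^{p_2}}$, whereas you specialize to indicators of $N$-point sets and let $N\to\infty$; both suffice.
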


\begin{proof}
Let $(a_k)_{k \in \Z^d} \in \CC^{\Z^d}$ be arbitrary and define $c \in \CC^{\Z \times \Z^d}$
by $c_{j,k} = \delta_{0, j} a_k$.
We will show that $\| c \|_{\TLone(A)} \asymp \| a \|_{\ell^{p_1}}$.
Once this is shown, it follows by symmetry and an application of \Cref{lem:basic_concidence}(i)
that $\| a \|_{\ell^{p_1}} \asymp \| c \|_{\TLone(A)} \asymp \| c \|_{\TLtwo(B)} \asymp \| a \|_{\ell^{p_2}}$,
hence $p := p_1 = p_2$, and $|\det(A)|^{\alpha_1 + \frac{1}{2} - \frac{1}{p}} = |\det(B)|^{\alpha_2 + \frac{1}{2} - \frac{1}{p}}$
by \Cref{lem:basic_concidence}(ii).

For showing that $\| c \|_{\TLone(A)} \asymp \| a \|_{\ell^{p_1}}$,
we will consider the cases $p_1 < \infty$ and $p_1 = \infty$.
\\~\\
\textbf{Case 1.} Let $p_1 \in (0, \infty)$.
If $q_1 < \infty$, then
\begin{align*}
  \| c \|_{\TLone(A)}
  &= \bigg\|
       \bigg(
         \sum_{j \in \Z} |\det(A)|^{-j q_1(\alpha_1 + \frac{1}{2})} \sum_{k \in \Z^d} |c_{j,k}|^{q_1} \Indicator_{Q_{j,k}^A}
       \bigg)^{1/{q_1}}
      \bigg\|_{L^{p_1}} \\
  &= \bigg\|
       \bigg(
         \sum_{k \in \Z^d} |a_k|^{q_1} \Indicator_{[0,1]^d + k}
       \bigg)^{1/{q_1}}
     \bigg\|_{L^{p_1}} \\
  &= \bigg\| \sum_{k \in \Z^d} |a_k |^{p_1} \Indicator_{[0,1]^d + k} \bigg\|^{1/{p_1}}_{L^1}
  = \bigg( \sum_{k \in \Z^d} |a_k|^{p_1} \bigg)^{1/{p_1}}.
\end{align*}
Similarly, if $q_1 = \infty$, then
\begin{align*}
  \| c \|_{\TLnaked^{\alpha_1}_{p_1, \infty}(A)}
  &= \bigg\|
       \sup_{j \in \Z}
         \sup_{k \in \Z^d}
           |\det(A)|^{-j (\alpha_1 + \frac{1}{2})} |c_{j,k}| \Indicator_{Q^A_{j,k}}
     \bigg\|_{L^{p_1}} \\
  &= \bigg\|
       \sup_{k \in \Z^d}
         |a_k| \Indicator_{[0,1]^d + k}
     \bigg\|_{L^{p_1}}
   =\bigg\| \sum_{k \in \Z^d} |a_k| \Indicator_{[0,1]^d + k} \bigg\|_{L^{p_1}} \\
  &= \bigg( \sum_{k \in \Z^d} |a_k|^{p_1} \bigg)^{1/{p_1}},
\end{align*}
where the penultimate step used that the sets $[0,1]^d + k$, $k \in \Z^d$
are pairwise disjoint up to null-sets.
\\~\\
\textbf{Case 2.} Let $p_1 = \infty$ and $0 < \varepsilon < 1$.
If $q_1 < \infty$, then it follows by \Cref{lem:borelsets} that
\[
  \| c \|_{\TLnaked_{\infty, q_1}^{\alpha_1}(A)}
  \asymp \inf
         \bigg\{
           \bigg\|
             \bigg(
               \sum_{k \in \Z^d} \big(  |a_k| \Indicator_{E_{0,k}} \big)^{q_1}
             \bigg)^{\frac{1}{q_1}}
           \bigg \|_{L^{\infty}}
           :
           E_{0,k} \subseteq Q^A_{0,k} , \; \frac{|E_{0,k}|}{|Q^A_{0,k}|} > \varepsilon
         \bigg\}
\]
where $E_{0,k} \subseteq Q^A_{0,k} = [0, 1]^d + k$ are Borel sets.
Since the sets $E_{0,k}$, $k \in \Z^d$, are pairwise disjoint up to null-sets, a direct calculation gives
\begin{align*}
  \bigg\|
    \bigg(
      \sum_{k \in \Z^d} \big(  |a_k| \Indicator_{E_{0,k}} \big)^{q_1}
    \bigg)^{\frac{1}{q_1}}
  \bigg \|_{L^{\infty}}
  = \| a \|_{\ell^{\infty}},
\end{align*}
which shows that $\| c \|_{\TLnaked_{\infty, q_1}^{\alpha_1}(A)} \asymp \| a \|_{\ell^{\infty}}$ whenever $q_1 < \infty$.
The remaining case $p_1 = q_1 = \infty$ is immediate.
\end{proof}

We next show that necessarily $p = q_1 = q_2$ whenever $\TLonep(A) = \TLtwop(B)$
and the set $\{ B^{j} A^{-j} : j \in \Z\}$ is infinite.
This is the most difficult part of the proof of \Cref{thm:necessary_intro}.

\begin{theorem} \label{thm:p=q}
Let $A, B \in \GL(d, \R)$ be expansive, $\alpha_1, \alpha_2 \in \R$, $p \in (0, \infty]$ and $ q_1, q_2 \in (0, \infty]$.

If $\{B^j A^{-j} : j \in \Z\}$ is infinite and $\TLonep(A) = \TLtwop(B)$, then $p = q_1 = q_2$.
\end{theorem}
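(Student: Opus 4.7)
The plan is to prove $p = q_1$ by constructing, for each $N \in \mathbb{N}$, a sequence $c^{(N)} \in \mathbb{C}^{\mathbb{Z}\times\mathbb{Z}^d}$ satisfying
\[
\|c^{(N)}\|_{\TLone(A)} \asymp |W|^{1/p}\, N^{1/q_1}
\quad\text{and}\quad
\|c^{(N)}\|_{\TLtwo(B)} \asymp |W|^{1/p}\, N^{1/p},
\]
with constants independent of $N$ and of an auxiliary parameter $|W|$. Since \Cref{lem:basic_concidence}(i) forces the two quasinorms to be comparable uniformly in $N$, the ratio $N^{1/q_1 - 1/p}$ must then be bounded above and below as $N \to \infty$, yielding $q_1 = p$. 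The analogous identity $p = q_2$ follows from the symmetric construction obtained by swapping the roles of $(A, \alpha_1, q_1)$ and $(B, \alpha_2, q_2)$, using that $\{A^j B^{-j} : j \in \Z\}$ is also infinite.

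To build $c^{(N)}$, I would apply \Cref{lem:pairwisedisjoint} to obtain integers $j_1, \ldots, j_N$, a point $x_0 \in \R^d$, and $\varepsilon > 0$ such that the sets $B^{j_t} A^{-j_t}\mathcal{B}_{R\varepsilon}(Rx_0)$, $1 \le t \le N$, are pairwise disjoint for every $R > 0$. Choose $R$ large enough that $R\varepsilon > \sqrt{d}\,\max_t \|A^{j_t}\|$, set $W := \mathcal{B}_{R\varepsilon}(Rx_0)$, and define $\Sigma_t := \{k \in \Z^d : Q^A_{j_t, k} \subseteq W\}$, so that $|\Sigma_t| \asymp |W|/|\det A|^{j_t}$. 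Set
\[
c^{(N)}_{j_t, k} := |\det A|^{j_t (\alpha_1 + 1/2)} \quad\text{for } k \in \Sigma_t,\; 1 \le t \le N,
\]
and $c^{(N)}_{j,k} := 0$ otherwise. Two geometric facts drive the argument: (i) at each scale $t$, the cubes $\{Q^A_{j_t, k}\}_{k \in \Sigma_t}$ tile $W$ up to a thin boundary layer, so a generic $x \in W$ lies in exactly one such cube at every scale $t$, contributing $N$ terms in total; (ii) since $Q^B_{j_t, k} = B^{j_t} A^{-j_t} Q^A_{j_t, k} \subseteq B^{j_t} A^{-j_t} W$, the entire family $\{Q^B_{j_t, k}\}_{t,\, k \in \Sigma_t}$ is pairwise disjoint by \Cref{lem:pairwisedisjoint}.

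The normalization gives $|\det A|^{-j_t(\alpha_1 + 1/2)} c^{(N)}_{j_t, k} = 1$, so by (i) the inner $\ell^{q_1}$-expression in $\|c^{(N)}\|_{\TLone(A)}$ equals $N$ at points in the bulk of $W$, producing the claimed estimate. By (ii), the inner $\ell^{q_2}$-expression on the $B$-side collapses to at most one nonzero term at each point, so
\[
\|c^{(N)}\|_{\TLtwo(B)}^p
= \sum_{t = 1}^{N} \sum_{k \in \Sigma_t} \bigl(|\det B|^{-j_t(\alpha_2 + 1/2)} c^{(N)}_{j_t, k}\bigr)^p |Q^B_{j_t, k}|.
\]
Inserting the coefficient values and invoking the identity $|\det A|^{p(\alpha_1 + 1/2)} |\det B|^{-p(\alpha_2 + 1/2)} = |\det A|/|\det B|$ deduced from \Cref{lem:basic_concidence}(ii) (applied with $p_1 = p_2 = p$), each per-cube contribution simplifies to $|\det A|^{j_t}$, so the per-scale contribution is $|\Sigma_t| \cdot |\det A|^{j_t} \asymp |W|$; summing over $t$ yields $\|c^{(N)}\|_{\TLtwo(B)}^p \asymp N |W|$, as required.

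The main obstacle is the case $p = \infty$, where the outer $L^p$-norm becomes a supremum and the integration arithmetic above breaks down. The remedy is to invoke \Cref{lem:borelsets}, which replaces the $L^\infty$-norm by an infimum over Borel subsets $E_{j,k} \subseteq Q^A_{j,k}$ with $|E_{j,k}|/|Q^A_{j,k}| > \varepsilon_0$ for a fixed $\varepsilon_0 \in (0, 1)$. A pigeonhole argument inside $W$---noting that $\sum_{t,\, k \in \Sigma_t} |E_{j_t, k}| \ge \varepsilon_0 N |W|$ while $\bigcup_{t, k} E_{j_t, k} \subseteq W$---still produces the lower bound $\|c^{(N)}\|_{\TLnaked^{\alpha_1}_{\infty, q_1}(A)} \gtrsim N^{1/q_1}$, matched above by the choice $E_{j_t, k} := Q^A_{j_t, k}$, while the $B$-disjointness keeps $\|c^{(N)}\|_{\TLnaked^{\alpha_2}_{\infty, q_2}(B)} \asymp 1$; the comparison then forces $q_1 = \infty$. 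Conversely, when $q_1 = \infty$ and $p < \infty$, the supremum collapses the $A$-side to $\asymp |W|^{1/p}$, so the ratio $N^{-1/p}$ being bounded forces $p = \infty$. In all regimes one therefore reaches $p = q_1$, and the symmetric construction gives $p = q_2$.
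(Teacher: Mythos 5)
Your construction is sound and, for $p<\infty$, it is essentially the paper's own argument: both apply \Cref{lem:pairwisedisjoint} to obtain scales $j_1,\dots,j_N$ and a ball $W=\mathcal{B}_{R\varepsilon}(Rx_0)$ whose images $B^{j_t}A^{-j_t}W$ are pairwise disjoint, fill $W$ with $A$-cubes at each scale $j_t$, and exploit that the $A$-side sees $N$ overlapping layers while the $B$-side sees $N$ pairwise disjoint families. The two cosmetic differences are: (a) you take all coefficients equal and compare the growth rates $N^{1/q_1}\asymp N^{1/p}$, whereas the paper inserts an arbitrary weight vector $\tau\in\R^N$ (normalized by $|\det A|^{j_t(\alpha_1+1/2-1/p)}$) and concludes from $\|\sigma\|_{\ell^{q_1}}\asymp\|\sigma\|_{\ell^{p}}$ uniformly in $N$ and $\sigma$; your version is marginally leaner and equally valid. (b) For $p=\infty$ you depart genuinely from the paper: the paper builds a bespoke configuration inside a single dyadic cube $Q^A_{j_0,k_0}$ (shrunk to $P_\delta$, with a carefully chosen $j_0\ge \ell_0+\max_t j_t$ so that the truncation $j\le\scale(P)$ in the definition of $\TLnaked^{\alpha_1}_{\infty,q_1}(A)$ is harmless), whereas you reuse the same ball $W$ and replace the defining supremum-over-cubes by the Borel-set characterization of \Cref{lem:borelsets}, extracting the lower bound $\gtrsim N^{1/q_1}$ by averaging $\sum_{t,k}\Indicator_{E_{j_t,k}}$ over $W$; this pigeonhole step is correct (the infimum over admissible $E$'s cannot defeat the mass bound $\sum_{t,k}|E_{j_t,k}|\ge\varepsilon_0 c_d N|W|$) and arguably cleaner, at the cost of relying on the cited equivalence of norms rather than the definition. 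One small quantitative point: choosing $R$ with merely $R\varepsilon>\sqrt{d}\max_t\|A^{j_t}\|$ is not quite enough, since then the ``bulk'' of $W$ covered by cubes at every scale, and likewise $\#\Sigma_t\cdot|\det A|^{j_t}/|W|$, could degenerate; you need a fixed margin such as $R\varepsilon\ge 2\sqrt{d}\max_t\|A^{j_t}\|$ (as the paper takes $R\ge\frac{2}{\varepsilon}R'$) so that all implied constants depend only on $d$ and $p$, not on $N$.
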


Our proof for the case $p < \infty$ of \Cref{thm:p=q} is based on some ideas
used for the construction of sequences in the proof of \Cref{thm:triebel}
(see \cite[Proposition 5.26]{triebel2006theory}).

\begin{proof}[Proof of \Cref{thm:p=q}]
 Since $ \{B^j A^{-j} : j \in \Z\}$ is infinite, \Cref{lem:pairwisedisjoint} shows for
 any given $N \in \mathbb{N}$ that there exist $\varepsilon > 0$, $j_1, \dots, j_N \in \Z$
 and $x_0 \in \R^d$ such that, for any $R>0$, the sets $B^{j_t} A^{-j_t} \mathcal{B}_{R \varepsilon} (R x_0)$,
 $1 \leq t \leq N$, are pairwise disjoint.
 In particular, $j_t \neq j_{t'}$ for $t \neq t'$.
 We will choose the value of $R > 0$ depending on the cases $p < \infty$ and $p = \infty$, which we treat separately.
\\~\\
 \textbf{Case 1.} We first consider the case $p < \infty$. In this case,
 we let $R' := \max_{1 \leq t \leq N} \sqrt{d} \| A^{j_t} \|$, and fix some $R \geq \frac{2}{\varepsilon} R'$.
Define $P_R :=  \mathcal{B}_{\frac{R \varepsilon}{2}} (R x_0)$ and set
\[
 I_{t, R} := \big\{ k \in \Z^d : A^{j_t} ([0,1]^d + k) \cap P_R \neq \emptyset \big\}, \quad 1 \leq t \leq N.
\]
Let $(\tau_t)_{t = 1}^N \in \mathbb{R}^N$ be arbitrary and define $c \in \CC^{\Z \times \Z^d}$ by
\begin{align*}
 c_{j,k} :=
 \begin{cases}
  |\tau_t| \cdot  |\det(A)|^{j_t (\alpha_1 + \frac{1}{2} - \frac{1}{p})},
  & \text{if $j = j_t$ for a (unique) $1 \leq t \leq N$ and $k \in I_{t, R}$} \\
  0,
  & \text{otherwise.}
 \end{cases}
\end{align*}
We will show that $p = q_1 = q_2$ by comparing the norm of $c$ for $\TLonep(A)$ and $\TLtwop(B)$.

We start by estimating the norm of $c$ for the space $\TLonep(A)$.
For this, consider the set $\Omega_{t, R} := \bigcup_{k \in I_{t, R}} A^{j_t} ([0,1]^d + k)$
and note that $P_R \subseteq \Omega_{t,R}$.
Second, note that
\[
  \diam( A^{j_t} ([0, 1]^d + k) ) \leq \| A^{j_t} \| \sqrt{d} \leq R'
\]
and that if two sets $\Omega, \Omega' \subseteq \mathbb{R}^d$ satisfy $\Omega \cap \Omega' \neq \emptyset$
and $\rho = \diam(\Omega)$, then $\Omega \subseteq \overline{\mathcal{B}}_{\rho} (\Omega')$.
Therefore, if $k \in I_{t,R}$, then
\[
  A^{j_t} ([0,1]^d + k)
  \subseteq \overline{\mathcal{B}}_{R'} (P_R)
  \subseteq \mathcal{B}_{R\frac{\varepsilon}{2} + R'} (R x_0)
  \subseteq \mathcal{B}_{R\varepsilon} (R x_0) ,
\]
where the last inclusion uses that $R' \leq R\frac{\varepsilon}{2}$.
In combination, this shows  $P_R \subseteq \Omega_{t,R} \subseteq \mathcal{B}_{R \varepsilon} (R x_0)$,
whence $|\Omega_{t,R}| \asymp_d (R\varepsilon)^d$.
On the one hand, if $q_1< \infty$, then a direct calculation gives
\begin{align*}
  \| c \|_{\TLonep(A)}
  &= \bigg\|
       \bigg(
         \sum_{j \in \Z}
           |\det(A)|^{-j q_1(\alpha_1 + \frac{1}{2})}
           \sum_{k \in \Z^d}
             |c_{j,k}|^{q_1} \Indicator_{Q_{j,k}^A}
       \bigg)^{1/{q_1}}
     \bigg\|_{L^{p}} \\
  &= \bigg\|
       \bigg(
         \sum_{t = 1}^N
           |\det(A)|^{-j_t \frac{q_1}{p}}
           |\tau_t|^{q_1}
           \sum_{k \in I_{t,R}}
             \Indicator_{A^{j_t} ([0,1]^d + k)}
       \bigg)^{1/q_1}
     \bigg\|_{L^{p}} \\
  &= \bigg\|
       \bigg(
         \sum_{t = 1}^N |\det(A)|^{-j_t \frac{q_1}{p}} |\tau_t|^{q_1}  \Indicator_{\Omega_{t,R}}
       \bigg)^{1/q_1}
     \bigg\|_{L^{p}} \\
  &\geq \| \Indicator_{P_R} \|_{L^{p}} \big\| \big(|\det (A)|^{-j_t / p} \tau_t \big)_{t = 1}^N \big\|_{\ell^{q_1}} \\
  &\gtrsim_{d,p} (R\varepsilon)^{d/p} \big\| \big(|\det (A)|^{-j_t / p} \tau_t \big)_{t = 1}^N \big\|_{\ell^{q_1}},
\end{align*}
and, similarly,
\begin{align*}
   \| c \|_{\TLonep(A)}
   &= \bigg\|
        \bigg(
          \sum_{t = 1}^N |\det(A)|^{-j_t \frac{q_1}{p}} |\tau_t|^{q_1}  \Indicator_{\Omega_{t,R}}
        \bigg)^{1/q_1}
      \bigg\|_{L^{p}} \\
  &\leq \| \Indicator_{\mathcal{B}_{R\varepsilon} (R x_0)} \|_{L^{p}}
        \big\| \big(|\det (A)|^{-j_t / p} \tau_t \big)_{t = 1}^N \big\|_{\ell^{q_1}} \\
  &\lesssim_{d,p} (R\varepsilon)^{d/p}
                  \big\| \big(|\det (A)|^{-j_t / p} \tau_t \big)_{t = 1}^N \big\|_{\ell^{q_1}}.
\end{align*}
On the other hand, if $q_1 = \infty$, then using that
\(
  \sup_{k \in I_{t,R}} \Indicator_{A^{j_t} ([0,1]^d + k)} = \Indicator_{\Omega_{t,R}}
\)
almost everywhere for $1 \leq t \leq N$ and that
$\Indicator_{P_R} \leq \Indicator_{\Omega_{t,R}} \leq \Indicator_{\mathcal{B}_{R \varepsilon} (R x_0)}$,
it follows that
\begin{align*}
   \| c \|_{\TLonep(A)}
   &= \bigg\|
        \sup_{1 \leq t \leq N}
          |\det(A)|^{-\frac{j_t}{p}}
          |\tau_t|
          \sup_{k \in I_{t,R}}
            \Indicator_{A^{j_t} ([0,1]^d + k)}
      \bigg\|_{L^{p}} \\
   &=  \bigg\|
         \sup_{1 \leq t \leq N} |\det(A)|^{-\frac{j_t}{p}} |\tau_t| \Indicator_{\Omega_{t,R}}
       \bigg\|_{L^{p}} \\
   &\asymp_{d,p} (R\varepsilon)^{d/p}
                 \big\| \big(|\det (A)|^{-j_t / p} \tau_t \big)_{t = 1}^N \big\|_{\ell^{q_1}}.
\end{align*}
Thus,
\(
  \| c \|_{\TLonep(A)}
  \asymp_{d,p} (R\varepsilon)^{d/p}
               \big\| \big(|\det (A)|^{-j_t / p} \tau_t \big)_{t = 1}^N \big\|_{\ell^{q_1}}
\)
for any possible $q_1 \in (0, \infty]$.

For estimating the norm of $c$ for the space $\TLtwop(B)$,
define $\Lambda_{t,R} := \bigcupdot_{k \in I_{t,R}} B^{j_t} ([0,1]^d +  k)$.
Note that $\Lambda_{t,R} = B^{j_t} A^{-j_t} \Omega_{t, R}$ and that the sets $\Lambda_{t,R}$, $1 \leq t \leq N$,
are pairwise disjoint as $\Omega_{t,R} \subseteq \mathcal{B}_{R \varepsilon} (R x_0)$
and $B^{j_t} A^{-j_t}\mathcal{B}_{R \varepsilon} (R x_0)$ are pairwise disjoint for $1 \leq t \leq N$.
Using that $|\det(A)|^{\alpha_1 + \frac{1}{2} - \frac{1}{p}} = |\det(B)|^{\alpha_2 + \frac{1}{2} - \frac{1}{p}}$ (cf.\ \Cref{prop:p1=p2}),
a direct calculation yields for the case $q_2 < \infty$ that
\begin{align*}
   \| c \|_{\TLtwop(B)}
   &= \bigg\|
        \bigg(
          \sum_{t= 1}^N
            |\det(B)|^{-j_t q_2(\alpha_2 + \frac{1}{2} - \frac{1}{p})}
            |\det(B)|^{- j_t \frac{q_2}{p}}
            \sum_{k \in \Z^d}
              |c_{j_t,k}|^{q_2} \Indicator_{Q_{j_t,k}^B}
        \bigg)^{1/{q_2}}
      \bigg\|_{L^{p}} \\
  &= \bigg\|
       \bigg(
         \sum_{t = 1}^N
         |\det(B)|^{-j_t \frac{q_2}{p}}
         |\tau_t|^{q_2}
         \sum_{k \in I_{t,R}}
           \Indicator_{B^{j_t} ([0,1]^d + k)}
       \bigg)^{1/q_2}
     \bigg\|_{L^{p}} \\
  &= \bigg\|  \sum_{t = 1}^N |\det(B)|^{-j_t } |\tau_t|^{p} \Indicator_{\Lambda_{t,R}}  \bigg\|_{L^{1}}^{1/p}
   = \bigg( \sum_{t = 1}^N |\det(B)|^{-j_t } |\tau_t|^{p} |\Lambda_{t,R}|  \bigg)^{1/p} \\
  &\asymp_{d,p} (R\varepsilon)^{d/p} \big\| \big(|\det(A)|^{-\frac{j_t}{p}} \tau_t \big)_{t = 1}^N \big\|_{\ell^{p}},
\end{align*}
where the last step used that $|\Lambda_{t,R}| \asymp_d \big( |\det(B)| / |\det(A)| \big)^{j_t} (R\varepsilon)^d$
since $\Lambda_{t,R} = B^{j_t} A^{-j_t} \Omega_{t,R}$.
The estimate
\[
  \| c \|_{\TLnaked^{\alpha_2}_{p, \infty} (B)}
  \asymp (R\varepsilon)^{d/p} \big\| \big(|\det(A)|^{-\frac{j_t}{p}} \tau_t \big)_{t = 1}^N \big\|_{\ell^{p}}
\]
for the case $q_2 = \infty$ is shown using similar arguments.

A combination of the above obtained estimates with \Cref{lem:basic_concidence}(i) thus yields
\begin{align*}
  (R\varepsilon)^{d/p} \big\| \big(|\det(A)|^{-\frac{j_t}{p}} \tau_t \big)_{t = 1}^N \big\|_{\ell^{p}}
  &\asymp \| c \|_{\TLtwop(B)} \\
  &\asymp \| c \|_{\TLonep(A)}
   \asymp (R\varepsilon)^{d/p} \big\| \big(|\det (A)|^{-\frac{j_t}{p}} \tau_t \big)_{t = 1}^N \big\|_{\ell^{q_1}},
\end{align*}
which implies that $q_1 = p$ since $N \in \N$ and $\tau = (\tau_t)_{t=1}^N \in \mathbb{R}^N$
were chosen arbitrary and the implied constants do not depend on $N, R, \varepsilon$ or $\tau$.
Since the condition that $\{B^j A^{-j} : j \in \Z\}$ is finite is symmetric in $A, B$,
it follows by symmetry that also $q_2 = p$.
\\~\\
\textbf{Case 2.} Suppose that $p = \infty$.
Throughout, we fix some $\delta \in (0, 1/6)$ and choose $\ell_0 \in \N$ such that
$A^{-\ell} [0, 1]^d \subseteq [-\delta, \delta]^d$ for all $\ell \geq \ell_0$,
which is possible since $A$ is expansive and hence $\| A^{-j} \| \to 0$ as $j \to \infty$;
this follows from the spectral radius formula, since the spectral radius of $A^{-1}$ satisfies $\rho(A^{-1}) < 1$.

Now, given $N \in \N$, choose $\eps > 0$ and $j_1,\dots,j_N \in \Z$ and $x_0 \in \R^d$
such that the sets $B^{j_t} A^{-j_t} \mathcal{B}_{R \varepsilon} (R x_0)$, $1 \leq t \leq N$,
are pairwise disjoint for all $R > 0$; this is possible by \Cref{lem:pairwisedisjoint}.
We then choose $j_0 \in \Z$ such that $j_0 \geq \ell_0 + \max_{1 \leq t \leq N} j_t$.
With this choice, we set $R := 10 \sqrt{d} \| A^{j_0} \| / \eps$ and $R_0 := R/10$.

Choose next $k_0 \in \Z^d$ such that $R x_0 \in A^{j_0} ([0,1]^d + k_0)$.
Then, since
\[
  \diam(A^{j_0} [0, 1]^d + k_0) \leq \| A^{j_0}\| \sqrt{d} \leq R_0 \eps
  ,
\]
it follows that
$A^{j_0} ([0,1]^d + k_0) \subseteq \overline{\mathcal{B}}_{R_0 \varepsilon} (R x_0) \subseteq \mathcal{B}_{R \varepsilon} (R x_0)$,
and hence it follows that also the sets $B^{j_t} A^{-j_t} A^{j_0} ([0,1]^d + k_0)$ are pairwise disjoint.
Finally, we set $P_{\delta} := A^{j_0} ([\frac{1}{2} - \delta, \frac{1}{2} + \delta)^d + k_0)$ and define
\[
  I_{t, \delta}
  := \{ k \in \Z^d : A^{j_t} ([0, 1]^d + k) \cap P_{\delta} \neq \emptyset \}, \quad 1 \leq t \leq N.
\]
Similar to Case 1, we define a sequence $c \in \CC^{\Z \times \Z^d}$ by
\begin{align*}
 c_{j,k} :=
 \begin{cases}
  |\det(A)|^{j_t (\alpha_1 + \frac{1}{2})} \cdot |\tau_t|,
  & \text{if $j = j_t$ for a (unique) $1 \leq t \leq N$ and $k \in I_{t, \delta}$} \\
  0,
  & \text{otherwise},
 \end{cases}
 \end{align*}
 where $\tau = (\tau_t)_{t=1}^N$ is an arbitrary given sequence in $\mathbb{R}^N$.

 For showing that $q_1 = q_2 = p$, we will estimate the norms $\| c \|_{\TLonep(A)}$ and $\| c \|_{\TLtwop(B)}$.
 For estimating $\| c \|_{\TLonep(A)}$, we define $\Omega_{t, \delta} := \bigcup_{k \in I_{t,\delta}} A^{j_t} ([0,1]^d + k)$.
 Then clearly $P_{\delta} \subseteq \Omega_{t, \delta}$,
 and we claim that $\Omega_{t, \delta} \subseteq Q_{j_0, k_0}^A = A^{j_0} ([0,1]^d + k_0)$.
 Indeed, note that
 if $k \in I_{t, \delta}$, then
 \begin{align*} k &\in A^{j_0 - j_t} k_0 + A^{j_0 - j_t} \big( [1/2 - \delta, 1/2 + \delta)^d - A^{j_t - j_0} [0, 1]^d \big) \\
  &\subseteq A^{j_0 - j_t} k_0 + A^{j_0 - j_t} \big( [1/2- \delta, 1/2 + \delta)^d - [- \delta, \delta]^d \big) \\
  &\subseteq A^{j_0 - j_t} k_0 + A^{j_0 - j_t} [1/2 - 2 \delta, 1/2 + 2 \delta)^d,
 \end{align*}
where we used that $j_t - j_0 \leq - \ell_0$ and $A^{-\ell} [0,1]^d \subseteq [-\delta, \delta]^d$ for all $\ell \geq \ell_0$.
Using again that $A^{j_t - j_0} [0, 1]^d \subseteq [ - \delta, \delta]^d$ and that $\delta < 1/6$, we finally see that
  \begin{align*}
  k + [0, 1]^d &\subseteq A^{j_0 - j_t} k_0 + A^{j_0 - j_t} \big( [1/2- 2\delta, 1/2 + 2\delta)^d + A^{j_t - j_0} [0, 1]^d \big) \\
  &\subseteq A^{j_0 - j_t} k_0 + A^{j_0 - j_t} [1/2 - 3 \delta, 1/2 + 3 \delta]^d \\
  &\subseteq A^{j_0 - j_t} k_0 + A^{j_0 - j_t} [0, 1]^d,
\end{align*}
whence $A^{j_t} \big([0,1]^d + k \big) \subseteq A^{j_0} \big ([0,1]^d + k_0 \big) = Q_{j_0, k_0}^{A}$
for any $k \in I_{t,\delta}$, as claimed.
As $j_0 \geq \ell_0 + \max_{1 \leq t \leq N} j_t$, we have $j_0 \geq j_t$ for each $1 \leq t \leq N$.
Hence, using the definition of $\TLnaked_{\infty, q_1}^{\alpha_1} (A)$ for $q_1 < \infty$, we estimate
\begin{align*}
  \| c \|_{\TLnaked_{\infty, q_1}^{\alpha_1} (A)}
  &\geq \bigg(
          \dashint_{Q^A_{j_0, k_0}}
            \sum_{j \in \Z, j \leq j_0}
              |\det(A)|^{- j q_1(\alpha_1+\frac{1}{2})}
              \sum_{k \in \Z^d}
                |c_{j,k}|^{q_1} \Indicator_{Q_{j,k}^A} (x)
          \; dx
        \bigg)^{\frac{1}{q_1}} \\
  &= \bigg(
       \dashint_{Q^A_{j_0, k_0}}
         \sum_{t = 1}^N
           |\tau_t|^{q_1}
           \sum_{k \in I_{t, \delta}}
           \Indicator_{Q_{j_t,k}^A} (x)
       \; dx
     \bigg)^{\frac{1}{q_1}} \\
  &= \bigg(
       \dashint_{Q^A_{j_0, k_0}} \sum_{t = 1}^N |\tau_t|^{q_1} \Indicator_{\Omega_{t, \delta}} (x) \; dx
     \bigg)^{\frac{1}{q_1}}
     \geq \bigg( \frac{|P_{\delta}|}{|Q_{j_0, k_0}^A|} \sum_{t = 1}^N |\tau_t|^{q_1} \bigg)^{1/q_1} \\
  &\gtrsim_{\delta, q_1} \| \tau \|_{\ell^{q_1}}.
\end{align*}
Clearly, $\| c \|_{\TLnaked^{\alpha_1}_{\infty, \infty}(A)} = \| \tau \|_{\ell^{\infty}}$,
so that $\| c \|_{\TLnaked_{\infty, q_1}^{\alpha_1}(A)} \gtrsim \| \tau \|_{\ell^{q_1}}$
for arbitrary $q_1 \in (0, \infty]$.
Here, we crucially used that $\frac{|P_{\delta}|}{|Q_{j_0, k_0}^A|} = (2 \delta)^d$
is independent of the choice of $N \in \N$ and of $\tau \in \R^N$.

We next provide an upper bound for $\| c \|_{\TLnaked^{\alpha_2}_{\infty, q_2} (B)}$.
Let $\Lambda_{t, \delta} := \bigcup_{k \in I_{t, \delta}} B^{j_t} ([ 0, 1]^d + k)$, and observe that
$\Lambda_{t, \delta} = B^{j_t} A^{-j_t} \Omega_{t, \delta} \subseteq B^{j_t} A^{- j_t} Q_{j_0, k_0}^A$,
where  the inclusion $\Omega_{t, \delta} \subseteq Q_{j_0, k_0}^A $ was shown already above.
In particular, since we showed towards the beginning of Part~2 of the proof that the
sets $B^{j_t} A^{-j_t} A^{j_0} ([0,1]^d + k_0)$, $1 \leq t \leq N$, are pairwise disjoint,
this implies that the sets $\Lambda_{t, \delta}$, $1 \leq t \leq N$, are pairwise disjoint as well.
Using this, together with the fact that $|\det(A)|^{\alpha_1 + \frac{1}{2}} = |\det(B)|^{\alpha_2 + \frac{1}{2}}$ (cf.\ \Cref{prop:p1=p2}),
a direct calculation entails for $q_2<\infty$ that
\begin{align*}
  \| c \|_{\TLnaked^{\alpha_2}_{\infty, q_2} (B)}
  &= \sup_{Q \in \mathcal{Q}^B}
     \bigg(
       \dashint_Q
       \bigg[
         \bigg(
           \sum_{\substack{j \in \Z \\ j \leq \scale_B(Q)}}
             |\det(B)|^{-j q_2 (\alpha_2 + \frac{1}{2})}
             \sum_{k \in \Z^d}
               |c_{j,k}|^{q_2} \Indicator_{Q^B_{j,k}} (x)
         \bigg)^{1/{q_2}}
       \bigg]^{q_2}
       \; dx
     \bigg)^{1/{q_2}} \\
  &\leq \bigg\|
          \bigg(
            \sum_{j \in \Z}
              |\det(B)|^{-j q_2 (\alpha_2 + \frac{1}{2})}
              \sum_{k \in \Z^d}
                |c_{j,k}|^{q_2} \Indicator_{Q^B_{j,k}}
          \bigg)^{\frac{1}{q_2}}
        \bigg\|_{L^{\infty}} \\
  &= \bigg\|
       \bigg(
         \sum_{t = 1}^N |\tau_t|^{q_2} \sum_{k \in I_{t,\delta}}  \Indicator_{Q^B_{j_t,k}}
       \bigg)^{\frac{1}{q_2}}
     \bigg\|_{L^{\infty}}
   = \bigg\|
       \bigg(
         \sum_{t = 1}^N |\tau_t|^{q_2} \Indicator_{\Lambda_{t, \delta}}
       \bigg)^{\frac{1}{q_2}}
     \bigg\|_{L^{\infty}} \\
  &= \| \tau \|_{\ell^{\infty}}.
\end{align*}
Clearly, also $\| c \|_{\TLnaked^{\alpha_2}_{\infty, \infty}(B)} = \| \tau \|_{\ell^{\infty}}$.

A combination of the estimates obtained above with \Cref{lem:basic_concidence}(i) gives
\[
  \| \tau \|_{\ell^{\infty}}
  \leq \| \tau \|_{\ell^{q_1}}
  \lesssim \| c \|_{\TLnaked_{\infty, q_1}^{\alpha_1}(A)}
  \asymp \| c \|_{\TLnaked_{\infty, q_2}^{\alpha_2} (B)}
  \leq \| \tau \|_{\ell^{\infty}},
\]
which implies that $\| \tau \|_{\ell^{q_1}} \asymp \| \tau \|_{\ell^{\infty}}$
for arbitrary $N \in \N$ and $\tau \in \R^N$ with an implicit constant independent of $N, \tau$.
Thus, $q_1 = \infty$.
Since the condition of $\{ B^j A^{-j} : j \in \Z \}$ being infinite is symmetric in $A, B$,
it follows by symmetry that also $q_2 = \infty$, so that $p = q_1 = q_2$.
This completes the proof.
\end{proof}

Lastly, we treat the case when $\{B^j A^{-j} : j \in \Z\}$ is finite.

\begin{proposition} \label{prop:q1=q2}
Let $A, B \in \GL(d, \R)$ be expansive matrices, $\alpha_1, \alpha_2 \in \R$, $p \in (0, \infty]$ and $ q_1, q_2 \in (0, \infty]$.

If $\{B^j A^{-j} : j \in \Z\}$ is finite and $\TLonep(A) = \TLtwop(B)$, then $\alpha_1 = \alpha_2$ and $q_1 = q_2$.
\end{proposition}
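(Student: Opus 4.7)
The smoothness exponents are pinned down quickly. The assumption that $\{B^j A^{-j} : j \in \Z\}$ is finite implies $|\det(A)| = |\det(B)|$ (as observed at the start of the proof of \Cref{thm:sufficient}), so combining this with the identity $|\det(A)|^{\alpha_1 + \frac{1}{2} - \frac{1}{p}} = |\det(B)|^{\alpha_2 + \frac{1}{2} - \frac{1}{p}}$ from \Cref{lem:basic_concidence}(ii) yields $|\det(A)|^{\alpha_1 - \alpha_2} = 1$; the expansiveness of $A$ then forces $\alpha_1 = \alpha_2 =: \alpha$. Applying \Cref{thm:sufficient} separately for $q_1$ and $q_2$ gives $\dot{\mathbf{f}}^{\alpha}_{p, q_i}(A) = \dot{\mathbf{f}}^{\alpha}_{p, q_i}(B)$ for $i \in \{1, 2\}$, so the hypothesis reduces to $\dot{\mathbf{f}}^{\alpha}_{p, q_1}(A) = \dot{\mathbf{f}}^{\alpha}_{p, q_2}(A)$, and it remains to deduce $q_1 = q_2$ from this single-matrix coincidence.

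The plan for the reduced problem is to construct, for every $N \in \N$ and every $\lambda = (\lambda_0, \ldots, \lambda_{N-1}) \in \CC^N$, a test sequence $c = c(N, \lambda) \in \CC^{\Z \times \Z^d}$ whose $\dot{\mathbf{f}}^{\alpha}_{p, q}(A)$-quasi-norm is comparable to $\|\lambda\|_{\ell^q}$ (up to a $q$-independent weight) with implicit constants depending only on $A, p, d$. By \Cref{lem:basic_concidence}(i) this forces $\|\lambda\|_{\ell^{q_1}} \asymp \|\lambda\|_{\ell^{q_2}}$ uniformly in $N$ and $\lambda$; testing against the constant sequence $\lambda = (1, \dots, 1)$ then yields $N^{1/q_1} \asymp N^{1/q_2}$ for every $N \in \N$, whence $q_1 = q_2$.

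The test sequence is obtained by filling a coarse-scale cube with cubes at all finer scales. Choose a sufficiently large constant $C_A$ (depending only on $A$, specified below), set $M := \lceil C_A N \rceil$ and $Q_* := Q^A_{M, 0}$, and define $K_j := \{k \in \Z^d : Q^A_{j, k} \cap Q_* \neq \emptyset\}$ for $0 \leq j \leq N - 1$. Let $c_{j, k} := \lambda_j \, |\det(A)|^{j(\alpha + 1/2)}$ when $0 \leq j \leq N - 1$ and $k \in K_j$, and $c_{j, k} := 0$ otherwise. For almost every $x \in Q_*$ and each $0 \leq j \leq N - 1$, the unique scale-$j$ cube containing $x$ has its index in $K_j$, so the pointwise expression $G(x) := \sum_j |\det(A)|^{-j q(\alpha + 1/2)} \sum_k |c_{j, k}|^q \Indicator_{Q^A_{j, k}}(x)$ equals $\|\lambda\|_{\ell^q}^q$ throughout $Q_*$, is dominated by $\|\lambda\|_{\ell^q}^q \Indicator_U$ with $U := \bigcup_{j} \bigcup_{k \in K_j} Q^A_{j, k}$, and vanishes off $U$. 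For $p < \infty$ this yields
\[
 |Q_*|^{1/p} \|\lambda\|_{\ell^q} \;\leq\; \|c\|_{\dot{\mathbf{f}}^{\alpha}_{p, q}(A)} \;\leq\; |U|^{1/p} \|\lambda\|_{\ell^q},
\]
while for $p = \infty$ the $\dashint_P$-characterization (with $P = Q_*$ supplying the lower bound and the pointwise bound $G \leq \|\lambda\|_{\ell^q}^q$ supplying the upper) gives the exact identity $\|c\|_{\dot{\mathbf{f}}^{\alpha}_{\infty, q}(A)} = \|\lambda\|_{\ell^q}$. The case $q = \infty$ is handled analogously by replacing $q$-sums with suprema.

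The one delicate point is the uniform bound $|U| \lesssim_A |Q_*|$ independent of $N$. Since $U$ lies in the Euclidean thickening of $Q_*$ by $\diam(Q^A_{N - 1, 0}) \lesssim_A \|A^{N - 1}\|$, this reduces to controlling the surface-to-volume ratio of the parallelepiped $A^M[0, 1]^d$. A face-by-face computation via the identity ``area $=$ volume $/$ height in direction $(A^{-M})^T e_i$'' gives
\[
 \mathrm{area}(\partial A^M[0, 1]^d) \;=\; 2 |\det(A)|^M \sum_{i = 1}^{d} \|(A^{-M})^T e_i\| \;\lesssim\; d\, |\det(A)|^M \, \|A^{-M}\|,
\]
and the expansiveness of $A$ (so that the spectral radius of $A^{-1}$ is strictly less than $1$) yields $\|A^{-M}\| \lesssim_A \rho_0^M$ for some $\rho_0 \in (0, 1)$ depending only on $A$. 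Combining this with a companion bound $\|A^{N-1}\| \lesssim_A \mu_0^{N-1}$ for some $\mu_0$ slightly larger than the spectral radius of $A$, the constant $C_A$ can be chosen large enough that the product $\mu_0^{N-1} \rho_0^{C_A N}$ is bounded uniformly in $N$, giving a geometric-series bound for the total boundary contribution that depends only on $A$ and completes the argument.
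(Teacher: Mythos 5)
Your argument is correct, and its skeleton matches the paper's: deduce $\alpha_1=\alpha_2$ from $|\det(A)|=|\det(B)|$ together with \Cref{lem:basic_concidence}(ii), reduce to a single matrix via \Cref{thm:sufficient}, and then build test sequences supported on all cubes of a range of scales meeting a fixed reference region, so that the Triebel--Lizorkin quasi-norm collapses to an $\ell^q$-norm over scales. The genuine difference lies in the direction of the scale hierarchy. The paper fills the \emph{fixed} unit cube $[0,1]^d$ with the cubes of scales $j\le 0$ meeting it; since $\sup_{j\le 0}\|A^j\|<\infty$ by expansiveness, each union $\Omega_j$ satisfies $[0,1]^d\subseteq\Omega_j\subseteq\mathcal{B}_{R'}(0)$ with $R'$ independent of $j$, so the geometric containment is free and one may even take infinitely many scales at once ($\tau\in\CC^{\N_0}$). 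You instead use the $N$ coarse scales $0,\dots,N-1$ relative to a reference cube $Q^A_{M,0}$ whose scale $M\approx C_A N$ grows with $N$, which is what forces you to prove the uniform bound $|U|\lesssim_A|Q_*|$ --- the only step of real work in your version, and one the paper's choice avoids entirely. That step is sound as you set it up, but the surface-area computation only captures the first-order term of the Steiner expansion; it is cleaner and fully rigorous to write
\[
 Q_*+\overline{\mathcal{B}}_r(0)=A^{M}\bigl([0,1]^d+A^{-M}\overline{\mathcal{B}}_r(0)\bigr)\subseteq A^{M}\bigl([0,1]^d+\overline{\mathcal{B}}_{r\|A^{-M}\|}(0)\bigr),
\]
whose measure is at most $|\det(A)|^{M}\,(1+2r\|A^{-M}\|)^d\lesssim_d|Q_*|$ once $C_A$ is chosen so that $r\|A^{-M}\|\lesssim 1$ --- exactly your condition $\mu_0^{N-1}\rho_0^{C_AN}\lesssim 1$. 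Two small points: in bounding $r=\max_{0\le j\le N-1}\diam(Q^A_{j,0})$ you should use $\max_{0\le j\le N-1}\|A^j\|\lesssim_A\mu_0^{N-1}$ rather than $\|A^{N-1}\|$ alone, since $j\mapsto\|A^j\|$ need not be monotone; and for the $p=\infty$ lower bound you implicitly need $\scale(Q_*)=M\ge N-1$ so that all active scales survive the truncation $j\le\scale(P)$, which your choice of $M$ guarantees. Neither affects the validity of the proof.
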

\begin{proof}
If $\{B^j A^{-j} : j \in \Z\}$ is finite, then necessarily $|\det(A)| = |\det(B)|$
(cf.\ the proof of \Cref{thm:sufficient}), and hence it follows by \Cref{lem:basic_concidence}
that  $|\det(A)|^{\alpha_1 + \frac{1}{2} - \frac{1}{p}} = |\det(A)|^{\alpha_2 + \frac{1}{2} - \frac{1}{p}}$.
Therefore, since $|\det(A)| > 1$, we get $\alpha := \alpha_1 = \alpha_2$.
Moreover, an application of \Cref{thm:sufficient} (which is applicable by \Cref{lem:characterization})
yields that $\TLnaked_{p, q_1}^{\alpha} (A) = \TLnaked_{p, q_2}^{\alpha}(B) = \TLnaked_{p, q_2}^{\alpha}(A)$.

 Given an arbitrary sequence $\tau = (\tau_t)_{t \in \mathbb{N}_0} \in \CC^{\N_0}$,
 define $c \in \CC^{\Z \times \Z^d}$ by
\begin{align*}
 c_{j,k} :=
 \begin{cases}
  |\det(A)|^{j(\alpha + \frac{1}{2})} |\tau_{-j}|, & \text{if $j \leq 0$ and $k \in I_j$} \\
  0, & \text{otherwise},
 \end{cases}
\end{align*}
where $I_j := \{ k \in \Z^d : Q^A_{j,k} \cap [0,1]^d \neq \emptyset \}$ for $j \in \Z$.
We let $\Omega_j := \bigcup_{k \in I_j} Q_{j,k}^A$ for $j \leq 0$.
Clearly, $[0, 1]^d \subseteq \Omega_j$.
On the other hand, if $j \leq 0$ and $k \in I_j$,
then $Q^A_{j,k} = A^j([0,1]^d + k) \subseteq \overline{\mathcal{B}}_R ([0,1]^d)$
for $R := \sqrt{d} \sup_{j \leq 0} \| A^j \|$, which is finite since $A$ is expansive,
and thus $Q_{j,k}^A \subseteq \mathcal{B}_{R'} (0)$ for some $R' > 0$ (only depending on $d, A$).
In combination, this yields $[0,1]^d \subseteq \Omega_j \subseteq \mathcal{B}_{R'}(0)$.

We now show that $\| c \|_{\TLA} \asymp \| \tau \|_{\ell^q}$ for any $q \in (0, \infty]$,
with implicit constant independent of $\tau$.
For this, we distinguish the cases $p < \infty$ and $p = \infty$.
\\~\\
\textbf{Case 1.} Suppose $p < \infty$.
For $q \in (0, \infty)$, the fact that $\Indicator_{[0,1]^d} \leq \Indicator_{\Omega_j} \leq \Indicator_{\mathcal{B}_{R'} (0)}$
for all $j \leq 0$ yields
\begin{align*}
 \| c \|_{\TL(A)}
 &= \bigg\| \bigg( \sum_{j \leq 0} |\tau_{-j}|^{q} \sum_{k \in I_j} \Indicator_{Q_{j,k}^A} \bigg)^{1/q} \bigg\|_{L^{p}} \\
 &= \bigg\| \bigg( \sum_{j \leq 0} |\tau_{-j}|^{q} \Indicator_{\Omega_j} \bigg)^{1/q} \bigg\|_{L^{p}} \\
 &\leq \bigg( \sum_{j \in \mathbb{N}_0} |\tau_{j}|^{q} \bigg)^{1/q} \| \Indicator_{\mathcal{B}_{R'} (0)} \|_{L^{p}}
\end{align*}
and $\| c \|_{\TL(A)} \geq \| \tau \|_{\ell^{q}} \| \Indicator_{[0,1]^d} \|_{L^{p}}$.
Similar arguments also give
$\| c \|_{\TLnaked^{\alpha}_{p, \infty}(A)} \asymp \| \tau \|_{\ell^{\infty}}$.
\\~\\
\textbf{Case 2.} Suppose $p = \infty$.
For $q < \infty$, it follows from the definition of $\| \cdot \|_{\TLnaked_{\infty, q}^{\alpha}(A)}$ that
\begin{align*}
 \| c \|_{\TLnaked_{\infty, q}^{\alpha}(A)}
 &\geq \bigg(
         \dashint_{Q^A_{0,0}}
           \sum_{j \in \Z, j \leq 0}
             |\det(A)|^{-j q (\alpha + \frac{1}{2})}
             \sum_{k \in \Z^d}
               |c_{j,k}|^q \Indicator_{Q^A_{j,k}} (x)
         \; dx
       \bigg)^{\frac{1}{q}} \\
 &= \bigg(
      \int_{[0,1]^d} \sum_{j \in \Z, j \leq 0} |\tau_{-j}|^{q} \sum_{k \in I_j} \Indicator_{Q^A_{j,k}} (x) \; dx
    \bigg)^{1/q} \\
 &= \bigg(
      \int_{[0,1]^d} \sum_{j \in \Z, j \leq 0} |\tau_{-j}|^{q}  \Indicator_{\Omega_j} (x) \; dx
    \bigg)^{1/q}
    = \bigg( \int_{[0,1]^d} \sum_{j \in \Z, j \leq 0} |\tau_{-j}|^{q}  \; dx \bigg)^{1/q} \\
 &= \| \tau \|_{\ell^{q}},
\end{align*}
where the penultimate step used that $[0, 1]^d \subseteq \Omega_j$ for $j \leq 0$.
On the other hand, using that $\Indicator_{\Omega_j} \leq \Indicator_{\mathcal{B}_{R'}(0)}$ for $j \leq 0$, yields
\begin{align*}
 \| c \|_{\TLnaked^{\alpha}_{\infty, q} (A)}
 &= \sup_{Q \in \mathcal{Q}^A}
    \bigg(
      \dashint_Q
      \bigg[
        \bigg(
          \sum_{j \in \Z, j \leq \scale_A(Q)}
            |\det(A)|^{-j q (\alpha + \frac{1}{2})}
            \sum_{k \in \Z^d}
              |c_{j,k}|^{q} \Indicator_{Q^A_{j,k}} (x)
        \bigg)^{1/{q}}
      \bigg]^{q} \; dx
    \bigg)^{1/{q}} \\
 &\leq \bigg\|
         \bigg(
           \sum_{j \in \Z}
             |\det(A)|^{-j q (\alpha + \frac{1}{2})}
             \sum_{k \in \Z^d}
               |c_{j,k}|^{q} \Indicator_{Q^A_{j,k}}
         \bigg)^{\frac{1}{q}}
       \bigg\|_{L^{\infty}} \\
 &= \bigg\| \bigg( \sum_{j \in \Z, j \leq 0} |\tau_{-j}|^{q} \Indicator_{\Omega_j} \bigg)^{1/q} \bigg\|_{L^{\infty}} \\
 &\leq \| \tau \|_{\ell^q} \| \Indicator_{\mathcal{B}_{R'} (0)} \|_{L^{\infty}},
\end{align*}
whenever $q < \infty$.
It is immediate that $\| c \|_{\TLnaked^{\alpha}_{\infty, \infty}(A)} = \| \tau \|_{\ell^{\infty}}$.
Thus, also $\| c \|_{\TLAi} \asymp \| \tau \|_{\ell^q}$ for all $q \in (0, \infty]$.

To complete the proof, note that \Cref{lem:basic_concidence} implies because of
$\TLnaked^{\alpha}_{p, q_1} (A) =  \TLnaked^{\alpha}_{p, q_2} (A)$
(cf.\ the beginning of the proof) that
\[
 \| \tau \|_{\ell^{q_1}}
 \asymp \| c \|_{\TLnaked^{\alpha}_{p, q_1} (A)}
 \asymp \| c \|_{\TLnaked^{\alpha}_{p, q_2} (A)}
 \asymp \| \tau \|_{\ell^{q_2}},
\]
with implicit constant independent of $\tau \in \CC^{N_0}$,
and hence $q_1 = q_2$.
\end{proof}

\begin{proof}[Proof of \Cref{thm:necessary_intro}]
 \Cref{thm:necessary_intro} follows from a combination of \Cref{prop:p1=p2}, \Cref{thm:p=q} and \Cref{prop:q1=q2}.
\end{proof}

\section{Application: A spectral characterization of isotropic Triebel-Lizorkin sequence spaces}
\label{sec:spectral}

This section provides a spectral characterization of those expansive matrices $A \in \mathrm{GL}(d, \mathbb{R})$
such that $\TL(A) = \TL(2 \cdot I_d)$ for all $p, q \in (0, \infty]$ and $\alpha \in \mathbb{R}$,
that is, those matrices generating the classical isotropic Triebel-Lizorkin sequence space $\TL(2 \cdot I_d)$.

We will use the following lemma on periodic matrices.
Recall that a matrix $A \in \mathrm{GL}(d, \mathbb{R})$ is called periodic
whenever $A^k = I_d$ for some $k \in \mathbb{N}$.
Although we expect this lemma to be part of the folklore, we provide a proof for the sake of completeness.

\begin{lemma} \label{lem:spectral}
Let $A \in \R^{d \times d}$.
Then the following assertions are equivalent:
\begin{enumerate}[label=(\roman*)]
  \item $A$ is periodic;
  \item $A$ is diagonalizable over $\CC$ and all eigenvalues of $A$ belong to the set
        \[\{ z \in \CC \,\,:\,\, \exists \, k \in \N: z^k = 1 \} ; \]
  \item There exists an invertible matrix $S \in \R^{d \times d}$ such that
        \[
          S^{-1} A S
          = \diag (B_1,\dots,B_b)
        \]
        is a block-diagonal matrix, where each block $B_j$ is either a $1 \times 1$
        matrix of the form $B_j = (\pm 1)$, or a $2 \times 2$ rotation matrix
        with a \enquote{rational angle}, i.e.,
        \[
          B_j
          = R_{\phi_j}
          = \begin{pmatrix}
            \cos (\phi_j) & - \sin(\phi_j) \\
            \sin(\phi_j) & \cos(\phi_j)
            \end{pmatrix}
          \qquad \text{with} \qquad
          \phi_j \in 2 \pi \Q
          .
        \]
\end{enumerate}
\end{lemma}

\begin{proof}
  (i) $\Rightarrow$ (ii):
  Let $k \in \N$ with $A^k = I_d$.
  Then, for the polynomial $p(X) = X^k - 1$, we have $p(A) = 0$,
  meaning the minimal polynomial of $A$ divides $p$.
  But $p$ has $k$ distinct zeros, namely $e^{2 \pi i j / k}, j = 0,\dots,k-1$.
  Hence, the minimal polynomial of $A$ factors into distinct linear factors over $\CC$.
  By \mbox{\cite[Chapter~6, Theorem~6]{HoffmanKunzeLinearAlgebra}},
  this means that $A$ is diagonalizable over $\CC$.
  Moreover, each eigenvalue $z \in \CC$ of $A$ is a zero of the minimal polynomial,
  and hence of $p$, and thus satisfies $z^k = 1$.
  \\~\\
 (ii) $\Rightarrow$ (iii):
  Write the eigenvalues of $A$ (repeated according to their multiplicity) as
  \[ \mu_1,\dots,\mu_{b_r}, \mu_{b_r + 1}, \overline{\mu_{b_r + 1}},\dots, \mu_b, \overline{\mu_b}, \]
  where $\mu_1,\dots,\mu_{b_r} \in \R$ and where $\Im (\mu_j) > 0$ for $b_r < j \leq b$.
  This is possible, since for a real matrix, the complex eigenvalues
  come in \enquote{conjugate pairs}.
  Since all eigenvalues of $A$ belong to
  $\{ z \in \CC \,\,:\,\, \exists \, k \in \N: z^k = 1 \}$,
   there exists $k \in \N$ such that all of the eigenvalues $\mu_j$
  are of the form $\mu_j = e^{2 \pi i t_j / k}$ for some $t_j \in \N_0$.
  This in particular implies $\mu_j \in \{ \pm 1 \}$ for $1 \leq j \leq b_r$.
  Since $A$ is diagonalizable over $\CC$,
  we can invoke \cite[Corollary~3.4.1.10]{HornJohnsonMatrixAnalysis}
  about the \enquote{real Jordan normal form} of diagonalizable matrices
  to conclude that there exists an invertible matrix $S \in \R^{d \times d}$ such that
  \[
    S^{-1} A S
    = \diag(B_1,\dots,B_b)
  \]
  is a block-diagonal matrix, where
  \[
    B_j
    = \begin{cases}
        (\mu_j) = (\pm 1),
        & \text{for } 1 \leq j \leq b_r, \\
        \left(\begin{smallmatrix} a_j & b_j \\ -b_j & a_j \end{smallmatrix}\right),
        & \text{for } b_r < j \leq m \text{ and } \mu_j = a_j + i b_j .
      \end{cases}
  \]
  Finally, note for $b_r < j \leq m$ that
  \[
    \begin{pmatrix}
      a_j & b_j
      \\ -b_j & a_j
    \end{pmatrix}
    = \begin{pmatrix}
      \cos(2 \pi t_j / k) & \sin(2 \pi t_j / k) \\
      - \sin(2 \pi t_j / k) & \cos(2 \pi t_j / k)
      \end{pmatrix}
    = R_{- 2 \pi t_j / k}
  \]
  is a rotation matrix with a \enquote{rational rotation angle}.
  \\~\\
  (iii) $\Rightarrow$ (i):
  It is well-known that the rotation matrices satisfy
  $R_\phi R_\theta = R_{\phi + \theta}$.
  Hence, for the case where $B_j = R_{\phi_j}$ with $\phi_j = 2 \pi \frac{t_j}{\ell_j}$
  with $t_j \in \Z$, $\ell_j \in \N$,
  we have \[ B_j^{\ell_j} = R_{\phi_j}^{\ell_j} = R_{\ell_j \phi_j} = R_{2 \pi t_j} = I_2. \]
  Similarly, if $B_j = (\pm 1)$, then for $\ell_j := 2$ we have $B_j^{\ell_j} = I_1$.
  Overall, this shows for $k := \ell_1 \cdots \ell_b$ that
  \begin{align*}
    A^k
    & = \bigl(S \diag(B_1,\dots,B_b) S^{-1}\bigr)^k \\
    & = S \diag(B_1^k,\dots,B_b^k) S^{-1} \\
    & = S I_d S^{-1}
    = I_d
    ,
  \end{align*}
  which shows that $A$ is periodic.
\end{proof}

The following theorem provides a spectral characterization of the matrices inducing
isotropic Triebel-Lizorkin sequence spaces.

\begin{theorem}
Let $A \in \mathrm{GL}(d, \mathbb{R})$. Then the following assertions are equivalent:
\begin{enumerate}
 \item[(i)] $\TL(A) = \TL(2 \cdot I_d)$ for all $\alpha \in \mathbb{R}$ and $p,q \in (0, \infty]$;
 \item[(ii)] There exists
             an invertible matrix $S \in \R^{d \times d}$ such that
             \[
               \frac{A}{2}
               = S \diag (B_1,\dots,B_b) S^{-1}
             \]
             is a block-diagonal matrix, where each $B_j$ is either a $1 \times 1$ matrix of the form
             $(\pm 1)$, or a $2 \times 2$ rotation matrix with a rotation angle in
             $2 \pi \Q$;
\item[(iii)] $A$ is diagonalizable over $\CC$
             and all eigenvalues of $A$ belong to the set
             \[
               \bigl\{ 2 z \in \CC \,\,:\,\, z \in \CC \text{ and } \exists \, k \in \N : z^k = 1 \bigr\}
               .
             \]
\end{enumerate}
\end{theorem}
\begin{proof}
 A combination of \Cref{thm:sufficient_intro}, \Cref{thm:necessary_intro} and \Cref{lem:characterization}
 shows that (i) is equivalent to $A^k = 2^k \cdot I_d$ for some $k \in \mathbb{N}$,
 i.e., $A/2$ is a periodic matrix.
 The equivalences follow therefore directly from \Cref{lem:spectral}.
\end{proof}

\appendix

\section{Postponed proof}

\begin{proof}[Proof of \Cref{lem:basic}]
(i) If $(c^{(n)})_{n \in \mathbb{N}}$ is a sequence consisting of elements $c^{(n)} \in \TLA$
with $\liminf_{n \to \infty} \| c^{(n)} \|_{\TLA} < \infty$ and $c \in \CC^{\Z \times \Z^d}$
satisfies $|c_{j,k}| \leq \liminf_{n \to \infty} |c_{j,k}^{(n)}|$ for all $j \in \Z$ and $k \in \Z^d$,
then it is easily verified via an application of Fatou's lemma that
$c \in \TLA$ with $\| c \|_{\TLA} \leq \liminf_{n \to \infty} \| c^{(n)} \|_{\TLA}$.
This means that the (quasi-)normed space $\TLA$ satisfies the Fatou property,
and hence it is complete by \cite[Section 65, Theorem 1]{zaanen1967integration};
see also \cite[Lemma 2.2.15]{VoigtlaenderPhDThesis} for the case of quasi-norms.
Moreover, it follows that $\TLA$ is solid, meaning that if $c \in \TLA$ and $c' \in \CC^{\Z \times \Z^d}$
are such that $|c'_{j,k}| \leq |c_{j,k}|$ for all $j \in \Z$ and $k \in \Z^d$,
then $c' \in \TLA$ and $\| c' \|_{\TLA} \leq \| c \|_{\TLA}$.
In turn, this implies the pointwise convergence of sequences converging in $\TLA$.
More precisely, $|c_{j,k} | \cdot \| e_{j,k} \|_{\TLA} =  \| c_{j,k} e_{j,k} \|_{\TLA} \leq \| c \|_{\TLA}$.
\\~\\
(ii) Fix $\varepsilon \in (0,1)$ and consider the (quasi-)norm $\| \cdot \|_{\TLA}^\ast$
defined in \Cref{lem:borelsets}.
Given $c \in \TLA$ and Borel sets $E_{j,k} \subseteq Q_{j,k}^A$
with $|E_{j,k}| / |Q^A_{j,k}| > \varepsilon$, we define
\[
  c^{E_{j,k}}_{j,k}(x) := |\det(A)|^{- j (\alpha + 1/2)} |c_{j,k}| \mathds{1}_{E_{j,k}}(x), \quad x \in \R^d.
\]
Then \Cref{lem:borelsets} yields
\[
  \| c \|_{\TLA}^\ast
  = \inf
    \bigg\{
      \bigg\|
        \Bigl\|
          \Bigl( c^{E_{j,k}}_{j,k} (\cdot) \Bigr)_{j \in \Z, k \in \Z^d}
        \Bigr\|_{\ell^q}
      \bigg \|_{L^p}
      :
      E_{j,k} \subseteq Q^A_{j,k}, \; \frac{|E_{j,k}|}{|Q^A_{j,k}|} > \varepsilon
    \bigg\}.
\]
Using that
\begin{align*}
  \bigg\|
    \Bigl\|
      \Bigl( c^{E_{j,k}}_{j,k} (\cdot) \Bigr)_{j \in \Z, k \in \Z^d}
    \Bigr\|_{\ell^q}
  \bigg \|_{L^p}
  = \bigg\|
      \Bigl\|
        \Bigl(c^{E_{j,k}}_{j,k} (\cdot) \Bigr)_{j \in \Z, k \in \Z^d}^r
      \Bigr\|_{\ell^{q/r}}^{1/r}
    \bigg \|_{L^p}
  = \bigg\|
      \Bigl\|
        \Bigl(c^{E_{j,k}}_{j,k} (\cdot) \Bigr)_{j \in \Z, k \in \Z^d}^r
      \Bigr\|_{\ell^{q/r}}
    \bigg \|^{1/r}_{L^{p/r}}
\end{align*}
and
\[
  \Bigl( (a + b)_{j,k}^{E_{j,k}} \Bigr)^r
  \leq \Bigl(a^{E_{j,k}}_{j,k} \Bigr)^r + \Bigl(b^{E_{j,k}}_{j,k} \Bigr)^r
\]
since $r \leq 1$, the claim follows easily from applications of the triangle inequality,
which is applicable since $q/r, p/r \geq 1$.
\end{proof}

\section*{Acknowledgments}
The authors thank M. Bownik for posing the question of classifying discrete Triebel-Lizorkin spaces.
For J.v.V., this research was funded in whole or in part
by the Austrian Science Fund (FWF): 10.55776/J4555 and 10.55776/PAT2545623.
For open access purposes, the author has applied a CC BY public copyright license
to any author-accepted manuscript version arising from this submission.
J.v.V. is grateful for the hospitality
of the Katholische Universität Eichstätt-Ingolstadt during his visit.
F.V. acknowledges support by the Hightech Agenda Bavaria.


\begin{thebibliography}{10}

\bibitem{bownik2003anisotropic}
M.~Bownik.
\newblock Anisotropic {H}ardy spaces and wavelets.
\newblock {\em Mem. Amer. Math. Soc.}, 164(781):vi+122, 2003.

\bibitem{bownik2007anisotropic}
M.~Bownik.
\newblock Anisotropic {T}riebel-{L}izorkin spaces with doubling measures.
\newblock {\em J. Geom. Anal.}, 17(3):387–424, 2007.

\bibitem{bownik2008duality}
M.~Bownik.
\newblock Duality and interpolation of anisotropic {T}riebel-{L}izorkin spaces.
\newblock {\em Math. Z.}, 259(1):131–169, 2008.

\bibitem{bownik2006atomic}
M.~Bownik and K.-P. Ho.
\newblock Atomic and molecular decompositions of anisotropic
  {T}riebel-{L}izorkin spaces.
\newblock {\em Trans. Amer. Math. Soc.}, 358(4):1469--1510, 2006.

\bibitem{cheshmavar2020classification}
J.~Cheshmavar and H.~F\"{u}hr.
\newblock A classification of anisotropic {B}esov spaces.
\newblock {\em Appl. Comput. Harmon. Anal.}, 49(3):863–896, 2020.

\bibitem{frazier1990discrete}
M.~Frazier and B.~Jawerth.
\newblock A discrete transform and decompositions of distribution spaces.
\newblock {\em J. Funct. Anal.}, 93(1):34–170, 1990.

\bibitem{frazier1991littlewood}
M.~Frazier, B.~Jawerth, and G.~Weiss.
\newblock {\em Littlewood-{P}aley theory and the study of function spaces},
  volume~79 of {\em CBMS Regional Conference Series in Mathematics}.
\newblock Published for the Conference Board of the Mathematical Sciences,
  Washington, DC; by the American Mathematical Society, Providence, RI, 1991.

\bibitem{hanninen2020two}
T.~H{\"a}nninen and I.~E. Verbitsky.
\newblock Two-weight {{\(L^p \to L^q\)}} bounds for positive dyadic operators
  in the case {{\(0 < q < 1 \leq p < \infty\)}}.
\newblock {\em Indiana Univ. Math. J.}, 69(3):837--871, 2020.

\bibitem{hanninen2018equivalence}
T.~S. H{\"a}nninen.
\newblock Equivalence of sparse and {Carleson} coefficients for general sets.
\newblock {\em Ark. Mat.}, 56(2):333--339, 2018.

\bibitem{HoffmanKunzeLinearAlgebra}
K.~Hoffman and R.~Kunze.
\newblock Linear algebra.
\newblock Englewood {Cliffs}, {N}. {J}.: {Prentice}-{Hall}, {Inc}. {VIII}, 407
  p., 1971.

\bibitem{HornJohnsonMatrixAnalysis}
R.~A. Horn and C.~R. Johnson.
\newblock {\em Matrix analysis.}
\newblock Cambridge: Cambridge University Press, 2nd edition, 2013.

\bibitem{koppensteiner2023classification}
S.~Koppensteiner, J.~T. van Velthoven, and F.~Voigtlaender.
\newblock Classification of anisotropic {Triebel}-{Lizorkin} spaces.
\newblock {\em Math. Ann.}, 389(2):1883--1923, 2024.

\bibitem{RudinFA}
W.~Rudin.
\newblock {\em Functional analysis}.
\newblock International Series in Pure and Applied Mathematics. McGraw-Hill,
  Inc., New York, second edition, 1991.

\bibitem{triebel2004wavelet}
H.~{T}riebel.
\newblock {W}avelet bases in anisotropic function spaces.
\newblock In {\em {F}unction {S}paces, {D}ifferential {O}perators and
  {N}onlinear {A}nalysis. {C}onference held in {S}vratka ({M}ay 2004)}, pages
  370--387, 2004.

\bibitem{triebel2006theory}
H.~Triebel.
\newblock {\em Theory of function spaces. {III}}, volume 100 of {\em Monogr.
  Math., Basel}.
\newblock Basel: Birkh{\"a}user, 2006.

\bibitem{velthoven2024classification}
J.~T. van Velthoven and F.~Voigtlaender.
\newblock Classification of anisotropic local {Hardy} spaces and inhomogeneous
  {Triebel}-{Lizorkin} spaces.
\newblock {\em Math. Z.}, 307(3):30, 2024.
\newblock Id/No 55.

\bibitem{verbitsky1996imbedding}
I.~E. Verbitsky.
\newblock Imbedding and multiplier theorems for discrete {Littlewood}-{Paley}
  spaces.
\newblock {\em Pac. J. Math.}, 176(2):529--556, 1996.

\bibitem{VoigtlaenderPhDThesis}
F.~Voigtlaender.
\newblock {\em Embedding Theorems for Decomposition Spaces with Applications to
  Wavelet Coorbit Spaces}.
\newblock PhD thesis, RWTH Aachen University, 2015.
\newblock \url{http://publications.rwth-aachen.de/record/564979}.

\bibitem{zaanen1967integration}
A.~C. Zaanen.
\newblock {\em Integration}.
\newblock North-Holland Publishing Co., Amsterdam; Interscience Publishers John
  Wiley \& Sons, Inc., New York, 1967.

\end{thebibliography}
\end{document}